\newtheorem{tw}{Theorem}[section]
\newtheorem{pr}[tw]{Proposition}
\newtheorem{lm}[tw]{Lemma}
\newtheorem{df}[tw]{Definition}
\newtheorem{ex}[tw]{Example}
\DeclareMathOperator{\cha}{char}
\DeclareMathOperator{\jac}{jac}
\DeclareMathOperator{\ijac}{Jac}
\DeclareMathOperator{\dgcd}{dgcd}
\newcommand{\arstd}{\renewcommand{\arraystretch}{1.7}}
\newcommand{\arstz}{\renewcommand{\arraystretch}{1}}
\author{Piotr J\k edrzejewicz\\
\normalsize Faculty of Mathematics and Computer Science\\
\normalsize Nicolaus Copernicus University\\
\normalsize Toru\'{n}, Poland}
\title{A characterization of $p$-bases\\ of rings of constants}
\date{}
\begin{document}

\maketitle

\begin{abstract}
We obtain two equivalent conditions for $m$ polynomials 
in $n$ variables to form a $p$-basis of a ring of constants 
of some polynomial $K$-derivation,
where $K$ is a UFD of characteristic $p>0$.
One of these conditions involves jacobians, 
and the second -- some properties of factors.
In the case of $m=n$ this extends the known theorem of Nousiainen,
and we obtain a new formulation of the jacobian conjecture
in positive characteristic.
\end{abstract}

\begin{table}[b]\footnotesize\hrule\vspace{1mm}
Keywords: derivation, ring of constants, $p$-basis, jacobian conjecture.\\
2010 Mathematics Subject Classification:
Primary 13N15, Secondary 13F20, 14R15.
\end{table}

\section{Introduction}

In this paper we give the full characterization
of $p$-bases of kernels of polynomial derivations (Theorem~\ref{t2}).
We refer to the sufficient condition $(i)$
and the necessary condition $(iv)$ from~\cite{jaccond}, Theorem~2.3.
Namely, we show that in the case of the polynomial algebra,
the condition $(i)$ is also necessary,
and we strengthen the condition $(iv)$ to make it also sufficient.
The crucial fact we need to realize this aim
is the positive characteristic version 
of Freudenburg's Lemma for $m$ polynomials,
where the zero characteristic version was obtained
in~\cite{keller}, Theorem~4.1.
Note also that the characterization of one-element $p$-bases
was obtained in~\cite{charoneel}, Theorem~4.2.
Let us sketch more precisely all these connections.

\subsection*{One-element $p$-bases}

Nowicki and Nagata in \cite{NN} obtained 
some interesting results concerning
rings of constants (kernels) of $k$-derivations
of the polynomial algebra $k[x,y]$, where $k$ is a field.
They showed that such a ring is always 
(except zero derivation) of the form:

\smallskip

\noindent
-- $k[f]$ for some $f\in k[x,y]$ if $\cha k=0$,

\smallskip

\noindent
-- $k[x^2,y^2,f]$ for some $f\in k[x,y]$ if $\cha k=2$.

\smallskip

\noindent
In the case of $\cha k=p>2$ they gave an example 
of a nonzero derivation, which ring of constants 
is not of the form $k[x^p,y^p,f]$. 

\medskip

The present author has been studied the rings of constants
of derivations in positive characteristic,
especially the properties of such single generators
(that is, one-element $p$-bases over the respective subring).
Hence, we ask, when $k[x_1^p,\dots,x_n^p,f]$ 
is the ring of constants of a $k$-derivation, 
where $\cha k=p>0$ and
$f\in k[x_1,\dots,x_n]\setminus k[x_1^p,\dots,x_n^p]$.
It was proven in~\cite{p-homogeneous} that
if $f$ is homogeneous modulo $p$ of a nonzero degree,
the following conditions are both necessary and sufficient:
\begin{quote}
$(1)$ \ 
$\gcd\big(\frac{\partial f}{\partial x_1},\dots,
\frac{\partial f}{\partial x_n}\big)=1$,

\smallskip

$(2)$ \ 
{\em $f$ has no square factors and no factors from 
$k[x_1^p,\dots,x_n^p]\setminus k$.}
\end{quote}

\smallskip

In \cite{charoneel} the author discussed sufficient conditions
and necessary conditions on various levels of generality.
In particular, the condition $(1)$ appeared to be sufficient 
for arbitrary polynomial $f$.
The final characterization was obtained in~\cite{one-element}.
It was shown that the above condition $(1)$ is, 
in general, also necessary.
The other equivalent condition has been obtained 
in the following form:
\begin{quote}
$(3)$ \ 
{\em for every $b,c\in k[x_1^p,\dots,x_n^p]$ such that $\gcd(b,c)=1$
the polynomial $bf+c$ has no square factors and no factors from 
$k[x_1^p,\dots,x_n^p]\setminus k$.}
\end{quote}

\subsection*{Many-element $p$-bases}

Nousiainen in~\cite{Nousiainen} 
(see \cite{NiitsumaTRU} or \cite{NiitsumaBCP})
proved that, given polynomials 
$f_1,$ $\dots,$ $f_n\in k[x_1,\dots,x_n]$,
where $k$ is a field of characteristic $p>0$,
then the jacobian condition 
$\det\left[\dfrac{\partial f_i}{\partial x_j}\right]\in k\setminus\{0\}$
holds if and only if $f_1,\dots,f_n$ form a $p$-basis 
of $k[x_1,\dots,x_n]$ over $k[x_1^p,\dots,x_n^p]$,
that is, $k[x_1,\dots,x_n]=k[x_1^p,\dots,x_n^p,f_1,\dots,f_n]$.
Note also that the conditions for existence 
of $p$-bases of ring extensions 
have been recently studied by Ono (\cite{Ono1}, \cite{Ono2}).

\medskip

The jacobian condition is an analog 
of the above condition $(1)$ for $n$ polynomials.
Then it is natural to ask, for arbitrary $m\in\{1,\dots,n\}$,
when $m$ polynomials form a $p$-basis 
of a ring of constants of a $k$-derivation
($k[x_1,\dots,x_n]$ is the only such a ring if $m=n$).

\medskip

Consider the following conditions for polynomials 
$f_1,\dots,f_m\in K[x_1,$ $\dots,$ $x_n]$,
where $K$ is a UFD of characteristic $p>0$, and $m>1$:
\begin{quote}
$(i)$ \
$\gcd\big(\left|\begin{array}{ccc}
\frac{\partial f_1}{\partial x_{j_1}}&
\cdots&\frac{\partial f_1}{\partial x_{j_m}}\\
\vdots&&\vdots\\
\frac{\partial f_m}{\partial x_{j_1}}&
\cdots&\frac{\partial f_m}{\partial x_{j_m}}
\end{array}\right|;\;
1\leqslant j_1<\dots<j_m\leqslant n\big)=1$,

\smallskip

$(ii)$ \
{\em the polynomials $f_1,\dots,f_m$ form a $p$-basis 
of a ring of constants of some $K$-derivation,}

\smallskip

$(iii)$ \
{\em the polynomials $f_1,\dots,f_m$ are $p$-independent and\\
$\gcd\big(\left|\begin{array}{cc}
\frac{\partial f_{i_1}}{\partial x_{j_1}}&
\frac{\partial f_{i_1}}{\partial x_{j_2}}\\
\frac{\partial f_{i_2}}{\partial x_{j_1}}&
\frac{\partial f_{i_2}}{\partial x_{j_2}}
\end{array}\right|;\;
1\leqslant j_1<j_2\leqslant n\big)=1$
for every $i_1<i_2$,}

\smallskip

$(iv)$ \
{\em the polynomials $f_1,\dots,f_m$ are $p$-independent and,
for every $h_1,\dots,h_m\in K[x_1^p,\dots,x_n^p]$,
the polynomials $f_1+h_1,\dots,f_m+h_m$ are pairwise coprime,
have no square factors and no (noninvertible) 
factors from $K[x_1^p,\dots,x_n^p]$.}
\end{quote}
The following implications were obtained
in~\cite{jaccond}, Theorem~2.3:
\arstd
$$\begin{array}{ccc}
(i)&\Rightarrow&(ii)\\
\Downarrow&&\Downarrow\\
(iii)&\Rightarrow&(iv).
\end{array}$$  
\arstz
It was clear that the condition $(iv)$ is, 
in general, not sufficient for $(ii)$,
and that the necessarity of $(i)$ remains an open question.

\medskip

The aim of this paper is to prove that 
the above condition $(i)$ is also necessary for $(ii)$,
and to modify the above condition $(iv)$ 
to make it both necessary and sufficient.

\subsection*{Freudenburg's lemma}

The main preparatory result we need to close 
the chain of implications in Theorem~\ref{t2}
is the positive characteristic version of 
Freudenburg's lemma for $m$ polynomials.
The original version of this lemma was presented 
by Freudenburg in~\cite{Freudenburg} 
for one polynomial in two variables over ${\mathbb C}$.

\begin{lm}[Freudenburg]
Given a polynomial $f\in \mathbb{C}[x,y]$,
suppose $g\in \mathbb{C}[x,y]$ is an irreducible non-constant divisor
of both $\frac{\partial f}{\partial x}$ and $\frac{\partial f}{\partial y}$.
Then there exists $c\in \mathbb{C}$ such that $g$ divides $f+c$.
\end{lm}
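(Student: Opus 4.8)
The plan is to read the hypotheses as saying that $f$ is ``constant along the curve $g=0$'' and then to descend that fact to a divisibility in $\mathbb{C}[x,y]$. First I would dispose of the degenerate case $g\in\mathbb{C}[x]$ (and, symmetrically, $g\in\mathbb{C}[y]$): an irreducible $g$ depending on $x$ alone equals $x-a$ up to a scalar, and then $g\mid\partial f/\partial y$ forces $f(a,y)$ to be independent of $y$, say $f(a,y)=c$, whence $g\mid f-c$. So from now on one may assume $\deg_y g\geqslant 1$; since $g$ is irreducible in $\mathbb{C}[x,y]$ it is primitive as an element of $\mathbb{C}[x][y]$, hence, by Gauss's lemma, still irreducible in $\mathbb{C}(x)[y]$. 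Therefore $L=\mathbb{C}(x)[y]/(g)$ is a field, finite (and, in characteristic zero, separable) over $\mathbb{C}(x)$; write $\theta$ for the image of $y$, so that $g(x,\theta)=0$ in $L$.

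Next I would transport the ordinary derivative $d/dx$ to $L$. Because $L/\mathbb{C}(x)$ is finite separable, $d/dx$ extends uniquely to a derivation $D$ of $L$, with $D\theta=-g_x(x,\theta)/g_y(x,\theta)$, and for any $h\in\mathbb{C}[x,y]$ the chain rule gives $D\big(h(x,\theta)\big)=h_x(x,\theta)+h_y(x,\theta)\,D\theta$. Applying this with $h=f$, and using that $g$ divides both $f_x$ and $f_y$ in $\mathbb{C}[x,y]$ — so that $f_x(x,\theta)=f_y(x,\theta)=0$ in $L$ — yields $D\big(f(x,\theta)\big)=0$.

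The core step is then the assertion that an element of $L$ annihilated by $D$ must lie in $\mathbb{C}$, and this is exactly where characteristic zero is used. If $\alpha\in L$ with $D\alpha=0$, apply $D$ to its minimal polynomial $T^{n}+c_{n-1}T^{n-1}+\dots+c_0$ over $\mathbb{C}(x)$: one gets $\sum_i c_i'\,\alpha^i=0$, a polynomial relation in $\alpha$ of degree $<n$, so by minimality every $c_i'=0$, i.e. each $c_i\in\mathbb{C}$; thus $\alpha$ is algebraic over $\mathbb{C}$, hence $\alpha\in\mathbb{C}$. Consequently $f(x,\theta)=c$ for some $c\in\mathbb{C}$, which means $g\mid f-c$ in $\mathbb{C}(x)[y]$. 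Finally, since $g$ is primitive in $\mathbb{C}[x][y]$ and $f-c\in\mathbb{C}[x][y]$, Gauss's lemma gives $g\mid f-c$ in $\mathbb{C}[x,y]$, so $-c$ is the constant sought.

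The only genuinely delicate points are the two invocations of Gauss's lemma (passing irreducibility and divisibility back and forth between $\mathbb{C}[x][y]$ and $\mathbb{C}(x)[y]$) and, more essentially, the identification of the field of constants of $(L,D)$ with $\mathbb{C}$; the latter is precisely the place where the argument fails in characteristic $p$, and it is what the positive-characteristic refinement developed later in this paper is designed to replace. Everything else reduces to a routine computation with the chain rule.
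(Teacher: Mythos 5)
Your proof is correct. Note that the paper does not prove this lemma at all---it is stated as a quotation of Freudenburg's result and used only as motivation---so there is no in-paper argument to compare against. Your route (pass to the field $L=\mathbb{C}(x)[y]/(g)$ after disposing of the case $g\in\mathbb{C}[x]$, extend $d/dx$ to the separable extension $L$, observe that the hypotheses force $f(x,\theta)$ to be a constant of the extended derivation, and identify the field of constants with $\mathbb{C}$ via the minimal-polynomial argument) is the standard one, and it is essentially the same mechanism as the van den Essen--Nowicki--Tyc generalization quoted immediately after the lemma, specialized to the principal prime $P=(g)$; their proposition replaces your explicit field $L$ by the residue field of an arbitrary prime. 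The individual steps all check out: the two invocations of Gauss's lemma are legitimate because $g$ is primitive in $\mathbb{C}[x][y]$ once $\deg_y g\geqslant 1$; the element $g_y(x,\theta)$ is nonzero in $L$ because $\deg_y g_y<\deg_y g$ and $g_y\neq 0$ in characteristic zero, so $D\theta$ is well defined; and the conclusion that an element of $L$ algebraic over $\mathbb{C}$ lies in $\mathbb{C}$ uses that $\mathbb{C}$ is algebraically closed. Your closing remark is also accurate: it is precisely the identification of the constants of $(L,D)$ with the base field that breaks down in characteristic $p$, which is why the paper's Proposition~\ref{p2} and Theorem~\ref{t1} replace the conclusion ``$g\mid f+c$ with $c\in K$'' by ``$g\mid bf+c$ with $b,c$ in the subring of $p$-th powers (adjoined with the remaining $f_j$).''
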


This fact was generalized to polynomials in $n$ variables over
an arbitrary algebraically closed field of characteristic zero
by van den Essen, Nowicki and Tyc in~\cite{ENT}, Proposition~2.1.

\begin{pr}[van den Essen, Nowicki, Tyc]
Let $k$ be an algebraically closed field of characteristic zero.
Let $P$ be a prime ideal in $k[x_1,\dots,x_n]$ and $f\in k[x_1,\dots,x_n]$.
If for each $i$ the partial derivative $\frac{\partial f}{\partial x_i}$
belongs to $P$, then there exists $c\in k$ such that $f-c\in P$.
\end{pr}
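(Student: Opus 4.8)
The plan is to move to the coordinate ring of $V(P)$ and detect the constancy of $f$ by means of Kähler differentials. Since $P$ is prime, $A:=k[x_1,\dots,x_n]/P$ is a finitely generated $k$-algebra which is an integral domain; let $K$ be its field of fractions and denote by $\bar x_i,\bar f$ the images of $x_i,f$ in $A$. The module of Kähler differentials $\Omega_{A/k}$ is generated over $A$ by $d\bar x_1,\dots,d\bar x_n$, and applying the universal derivation $d\colon A\to\Omega_{A/k}$ to $\bar f$ gives, by the Leibniz rule,
$$d\bar f=\sum_{i=1}^{n}\overline{\frac{\partial f}{\partial x_i}}\;d\bar x_i.$$
By hypothesis each coefficient $\overline{\partial f/\partial x_i}$ vanishes in $A$, so $d\bar f=0$ in $\Omega_{A/k}$, and hence also in $\Omega_{K/k}=K\otimes_A\Omega_{A/k}$. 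Thus it suffices to prove the following: \emph{for $a\in K$, if $da=0$ in $\Omega_{K/k}$ then $a\in k$}; applied to $a=\bar f$ this yields $\bar f=c$ for some $c\in k$, i.e.\ $f-c\in P$.

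To establish this I would invoke separable generation. As $k$ has characteristic zero it is perfect, so the finitely generated extension $K/k$ is separably generated and $\dim_K\Omega_{K/k}=\trdeg_k K=:d$. Suppose $a$ were transcendental over $k$. Then $\{a\}$ extends to a transcendence basis $a=t_1,t_2,\dots,t_d$ of $K/k$, and because the characteristic is zero the finite extension $K/k(t_1,\dots,t_d)$ is automatically separable; hence $t_1,\dots,t_d$ is a \emph{separating} transcendence basis and $d t_1,\dots,d t_d$ form a $K$-basis of $\Omega_{K/k}$. In particular $da=d t_1\neq 0$, contradicting $da=0$. Therefore $a$ is algebraic over $k$, and since $k$ is algebraically closed, $a\in k$, which finishes the argument.

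The only substantial point is the last paragraph, and both hypotheses on $k$ are used there in an essential way. Characteristic zero is exactly what makes every transcendence basis separating; in characteristic $p$ the statement is false, since any $b^{p}$ with $b\notin k$ satisfies $d(b^{p})=0$. Algebraic closedness is needed only to pass from ``algebraic over $k$'' to ``lying in $k$'' --- over $k=\mathbb{Q}$ the element $\sqrt 2$ already shows why. An equivalent way to phrase the key step, avoiding $\Omega$ altogether, is to note that $d\bar f=0$ says precisely that $D\bar f=0$ for every $k$-derivation $D$ of $K$, and then, given a transcendental $\bar f$, to use the partial derivative with respect to a separating transcendence basis containing $\bar f$ as a derivation that does not kill it. I expect the proof in \cite{ENT} to proceed along these lines.
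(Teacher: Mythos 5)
Your argument is correct. Be aware, though, that the paper does not prove this proposition at all: it is quoted as background from \cite{ENT} (Proposition~2.1 there), so there is no ``paper's own proof'' to compare against line by line. Your reduction is the standard one and is sound: passing to the domain $A=k[x_1,\dots,x_n]/P$, the conormal/localization properties of K\"ahler differentials give $d\bar f=\sum_i\overline{\partial f/\partial x_i}\,d\bar x_i=0$ in $\Omega_{K/k}$, and in characteristic zero the kernel of $d\colon K\to\Omega_{K/k}$ is the algebraic closure of $k$ in $K$ (every transcendence basis is separating, and $\dim_K\Omega_{K/k}=\trdeg_k K$), which equals $k$ by algebraic closedness. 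The original argument of van den Essen, Nowicki and Tyc is the derivation-theoretic reformulation you sketch in your closing paragraph: $\bar f$ is annihilated by every $k$-derivation of the quotient field of $A$, and the common ring of constants of all such derivations of a finitely generated field extension in characteristic zero is the relative algebraic closure of $k$. The two formulations are equivalent via $\operatorname{Hom}_K(\Omega_{K/k},K)=\operatorname{Der}_k(K)$, so your route buys nothing essentially new, but it is complete and correctly isolates where both hypotheses (characteristic zero and algebraic closedness) are used.
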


The present author obtained the following generalization
for one polynomial over an arbitrary field (even a UFD).

\begin{tw}[\cite{charoneel}, Theorem 3.1]
Let $K$ be a UFD, let $P$ be a prime ideal of $K[x_1,\dots,x_n]$.
Consider a polynomial $f\in K[x_1,\dots,x_n]$ such that 
$\frac{\partial f}{\partial x_i}\in P$ for $i=1,\dots,n$.

\medskip

\noindent
{\bf a)}
If $\cha K=0$, 
then there exists an irreducible polynomial $W(T)\in K[T]$
such that $W(f)\in P$.

\medskip

\noindent
{\bf b)}
If $\cha K=p>0$, then there exist $b,c\in K[x_1^p,\dots,x_n^p]$
such that $\gcd(b,c)\sim 1$, $b\not\in P$ and $bf+c\in P$.
\end{tw}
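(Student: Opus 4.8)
\medskip
\noindent\textbf{Sketch of a proof.}
The idea is to pass to the fraction field $F$ of the domain $R:=K[x_1,\dots,x_n]/P$, write $\bar f$ for the image of $f$, and deduce both parts from a single observation: the hypothesis forces $d\bar f=0$ in a suitable module of K\"ahler differentials attached to $F$, and one then reads off what ``$d\bar f=0$'' means from field theory.

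\emph{Part a).} If $P\cap K\neq 0$, then $P\cap K$ contains a prime element $\pi$ of the UFD $K$; since $\pi$ is irreducible already in $K[T]$, the polynomial $W(T):=\pi$ works, as $W(f)=\pi\in P$. Assume now $P\cap K=0$. Then $P$ is disjoint from $K\setminus\{0\}$, hence extends to a prime ideal $\widetilde P=P\,M[x_1,\dots,x_n]$ of $M[x_1,\dots,x_n]$, where $M=\operatorname{Frac}(K)$, with $\widetilde P\cap K[x_1,\dots,x_n]=P$, with $\operatorname{Frac}\big(M[x_1,\dots,x_n]/\widetilde P\big)=F$, and with each $\partial f/\partial x_i$ still in $\widetilde P$. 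Applying the universal derivation and base-changing to $F$, the element $df=\sum_i(\partial f/\partial x_i)\,dx_i$ is carried to $d\bar f\in\Omega_{F/M}$, and this is $0$ because every $\partial f/\partial x_i$ maps to $0$ in $F$. Since $\operatorname{char}K=0$, the kernel of $d\colon F\to\Omega_{F/M}$ is the relative algebraic closure of $M$ in $F$, so $\bar f$ is algebraic over $M$; taking its minimal polynomial $w(T)\in M[T]$, clearing denominators and passing to the primitive part yields, by Gauss's lemma, an irreducible $W(T)\in K[T]$ with $W(f)\in\widetilde P\cap K[x_1,\dots,x_n]=P$. (Alternatively, the algebraicity of $\bar f$ over $M$ follows by extending scalars to $\overline M$, choosing a prime over $\widetilde P$, and quoting the Proposition of van den Essen, Nowicki and Tyc above.)

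\emph{Part b).} Set $B=K[x_1^p,\dots,x_n^p]$, which is a UFD (it is $K[y_1,\dots,y_n]$ via $y_i\mapsto x_i^p$); let $\bar B\subseteq R$ be the image of $B$ and $E=\operatorname{Frac}(\bar B)\subseteq F$. Since $K[x_1,\dots,x_n]$ is free of rank $p^n$ over $B$, the ring $R$ is a finite $\bar B$-module, so $F/E$ is a finite extension; and $F=E(\bar x_1,\dots,\bar x_n)$ with $\bar x_i^{\,p}\in\bar B\subseteq E$, whence $F^p\subseteq E$, i.e.\ $F/E$ is purely inseparable of exponent $\leqslant 1$. The claim is that $\bar f\in E$. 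Granting it, write $\bar f=-\bar c/\bar b$ with $b,c\in B$ and $b\notin P$; then $bf+c\in P$, and dividing $b,c$ by $d:=\gcd(b,c)$ in $B$ --- which is not in $P$, else $b\in P$ --- we obtain coprime $b',c'\in B$ with $b'\notin P$ and $b'f+c'\in P$, as wanted. To prove the claim, observe that the $\partial/\partial x_i$ are $B$-derivations (here $\partial x_j^p/\partial x_i=0$ as $\operatorname{char}K=p$), so $\Omega_{K[x_1,\dots,x_n]/B}$ is free on $dx_1,\dots,dx_n$ with $df=\sum_i(\partial f/\partial x_i)\,dx_i$; base-changing along $K[x_1,\dots,x_n]\to F$ (compatibly with $B\to E$) sends this to $d\bar f\in\Omega_{F/E}$, which is $0$ since each $\partial f/\partial x_i\in P$. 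And for a finite purely inseparable extension $F/E$ with $F^p\subseteq E$, the kernel of $d\colon F\to\Omega_{F/E}$ equals $E$ (expand a given element in a $p$-basis of $F/E$); hence $\bar f\in E$.

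\emph{Where the difficulty lies.} Apart from routine care with localisations and with base change of $\Omega$, the heart of the matter is the two field-theoretic identifications of $\ker d$: that over a field of characteristic $0$ it is the relative algebraic closure, and that for a finite purely inseparable extension of exponent $\leqslant 1$ it is the ground field. These are classical, but they are exactly where the characteristic enters, and the second is the real positive-characteristic content of the theorem. A secondary point is the descent in part a) from a minimal polynomial over $\operatorname{Frac}(K)$ to an irreducible one over $K$, which uses that $K$ is a UFD through Gauss's lemma.
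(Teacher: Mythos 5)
Your argument is correct. Note first that the paper does not actually prove this statement: it is quoted verbatim from~\cite{charoneel}, Theorem~3.1, as background, so the only thing to compare against is the machinery the paper builds when it \emph{generalizes} part~b) (Proposition~\ref{p1}, Lemma~\ref{l5} and Proposition~\ref{p2}). There the route is: the vanishing of the gradient modulo $P$ is translated into a statement about $K$-derivations killing $\overline{f}$ modulo $Q$, whence $\overline{f}$ lies in the smallest ring of constants $(\overline{B})_0\cap\overline{A}$, i.e.\ is $p$-dependent over $\overline{B}$, and Lemma~\ref{l5} converts $p$-dependence into the existence of $b,c$. Your proof reaches the same pivot point --- $\overline{f}$ lies in the fraction field of $\overline{B}$ --- but through the dual language of K\"ahler differentials: $d\overline{f}=0$ in $\Omega_{F/E}$, and $\ker d=E$ for a finite extension with $F^p\subseteq E$, read off from a $p$-basis. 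The two are essentially equivalent (derivations are the dual of $\Omega$), but yours is self-contained field theory and treats parts a) and b) uniformly, while the paper's apparatus is tailored to iterate over several polynomials $f_1,\dots,f_m$ and to keep track of which $f_i$ becomes dependent, which is what Theorem~\ref{t1} needs. Your reductions are all sound: the localization in a) when $P\cap K=0$, the constant prime $\pi$ when $P\cap K\neq 0$ (which is genuinely needed --- for $K=\mathbb{Z}$, $P=(2)$, $f=x^2$ no non-constant $W$ exists, so ``irreducible'' must be read as ``irreducible element of $K[T]$''), the Gauss-lemma descent, and the final division by $\gcd(b,c)$ in $B$ using primality of $P$. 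One point you leave implicit: the coprimality in the conclusion of b) could be read in $B$ or in $K[x_1,\dots,x_n]$; these agree for elements of $B$ (if an irreducible $g$ of $A$ divided both $b$ and $c$ in $A$, then $g^p$ would divide $b^p$ and $c^p$ in $B$, contradicting $\gcd_B(b,c)^p\sim 1$), so nothing is lost, but it is worth a sentence.
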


The version for $n$ polynomials in $n$ variables
over a field of characteristic zero has the following form.

\begin{tw}[\cite{keller}, Theorem 4.1]
Let $k$ be a field of characteristic zero,
let $f_1,\dots,f_n\in k[x_1,$ $\dots,x_n]$ be arbitrary polynomials,
and let $g\in k[x_1,\dots,x_n]$ be an irreducible polynomial.
The following conditions are equivalent:

\medskip

\noindent
$(i)$ \
$g$ divides $\det\left[\dfrac{\partial f_i}{\partial x_j}\right]$,

\medskip

\noindent
$(ii)$ \
$g^2$ divides $w(f_1,\dots,f_n)$ for some irreducible polynomial
$w\in k[x_1,\dots,x_n]$.
\end{tw}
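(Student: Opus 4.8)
The plan is to dispatch $(ii)\Rightarrow(i)$ directly and to spend the effort on $(i)\Rightarrow(ii)$. For $(ii)\Rightarrow(i)$: if $f_1,\dots,f_n$ are algebraically dependent over $k$ there is nothing to prove, since then $\det\big[\tfrac{\partial f_i}{\partial x_j}\big]=0$; otherwise, assuming $g^2\mid w(f_1,\dots,f_n)$ for an irreducible $w$, differentiation gives $g\mid\tfrac{\partial}{\partial x_j}\big(w(f)\big)=\sum_i\tfrac{\partial w}{\partial T_i}(f)\cdot\tfrac{\partial f_i}{\partial x_j}$ for every $j$, so the row vector $\big(\overline{\tfrac{\partial w}{\partial T_i}(f)}\big)_i$ annihilates the reduction $\overline J$ of the Jacobian matrix over the domain $D=k[x_1,\dots,x_n]/(g)$. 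Either this row vector is nonzero in $D$ --- forcing $\det\overline J=0$, i.e.\ $g\mid\det J$ --- or else all $\tfrac{\partial w}{\partial T_i}$ lie in $P:=\ker\bigl(k[T_1,\dots,T_n]\to D\bigr)$ together with $w$, which (as $w$ is irreducible and $\cha k=0$) forces the singular locus of $V(w)$ to contain $V(P)$, so $P$ has height $\ge 2$ and the image of $V(g)$ under $(f_1,\dots,f_n)$ has codimension $\ge 2$; then $V(g)$ is contracted by this dominant map and hence lies in its critical locus $\{\det J=0\}$, again giving $g\mid\det J$.

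For $(i)\Rightarrow(ii)$ one may again assume $f_1,\dots,f_n$ algebraically independent, since otherwise a polynomial of least total degree vanishing at $(f_1,\dots,f_n)$ is irreducible and provides $(ii)$ at once. Then $k(x_1,\dots,x_n)$ is finite over $k(f_1,\dots,f_n)$, $\det J\ne 0$, and $P=\ker\bigl(k[T_1,\dots,T_n]\to D\bigr)$ is a nonzero prime ideal. Since $g\mid\det J$, the matrix $\overline J$ is singular over $E:=\operatorname{Frac}(D)$, so there is a nonzero $\mathbf p\in E^{\,n}$ with $\overline J\mathbf p=0$; clearing denominators produces $v_1,\dots,v_n\in k[x_1,\dots,x_n]$, not all divisible by $g$, such that the $k$-derivation $\partial_v=\sum_j v_j\tfrac{\partial}{\partial x_j}$ satisfies $\partial_v(f_i)\in(g)$ for all $i$.

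I would then split into cases according to whether $\mathbf p$ can be taken transverse to $V(g)$, i.e.\ whether $\partial_v(g)\notin(g)$ for a suitable choice. If $\partial_v(g)\in(g)$ is unavoidable --- equivalently every nonzero vector of $\ker\overline J$ is orthogonal to $\overline{\nabla g}$ --- then $\partial_v$ descends to a derivation of $D$ which is nonzero (as not all $v_j$ lie in $(g)$) and kills every $\bar f_i$; extending it to $E$, a nonzero derivation of a field of transcendence degree $n-1$ over $k$ kills a subfield of transcendence degree only $n-2$, so $\trdeg_k k(\bar f_1,\dots,\bar f_n)\le n-2$ and $P$ has height $\ge 2$. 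When $P$ has height $1$ this is impossible, so one may pick $\mathbf p$ with $\partial_v(g)\notin(g)$; then for any irreducible $w\in P$, writing $w(f)=g^a u$ with $g\nmid u$ and $a\ge 1$, the identity $\partial_v(w(f))=\sum_i\tfrac{\partial w}{\partial T_i}(f)\,\partial_v(f_i)\in(g)$ combined with $\partial_v(g^a u)=a\,g^{a-1}\partial_v(g)\,u+g^a\partial_v(u)$ --- whose $g$-adic valuation is exactly $a-1$, using $\cha k=0$ and $g\nmid\partial_v(g)\,u$ --- yields $a\ge 2$, hence $g^2\mid w(f)$ and $(ii)$. In the remaining case $P$ has height $\ge 2$; there $(ii)$ is obtained from the fact that $P':=\ker\bigl(k[T]\to k[x]/(g^2)\bigr)$ is $P$-primary --- because $(g^2)$ is a $(g)$-primary ideal of $k[x_1,\dots,x_n]$, so $k[x]/(g^2)$ has $(g)/(g^2)$ as its only associated prime --- whence $P^m\subseteq P'$ for some $m$, and it suffices to exhibit an irreducible polynomial inside $P^m$, for instance a suitable Weierstrass-type combination $W_0^{m}+W_1^{N}$ of two coprime irreducibles $W_0,W_1\in P$.

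I expect the main obstacle to be precisely the interface between the linear-algebraic hypothesis ``$\det J$ vanishes along $V(g)$'' and the arithmetic conclusion ``$g^2$ divides some $w(f_1,\dots,f_n)$'': proving that when $P$ has height $1$ the (then one-dimensional) critical direction of $(f_1,\dots,f_n)$ along $V(g)$ is genuinely transverse to $V(g)$, through the transcendence-degree contradiction above, together with the separate, more combinatorial task of producing an irreducible element of the primary ideal $P'$ in the high-codimension case. A convenient bookkeeping device throughout is the adjugate identity $\det J\cdot\big(\tfrac{\partial w}{\partial T_i}(f)\big)_i=\nabla_x\big(w(f)\big)\cdot\operatorname{adj}(J)$, which links the $g$-adic valuations of $w(f)$ and of $\det J$ and confirms that the dichotomy of the previous paragraph is the correct one.
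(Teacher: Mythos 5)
Your architecture for $(i)\Rightarrow(ii)$ --- pass to the prime $P=\ker\bigl(k[T_1,\dots,T_n]\to k[x_1,\dots,x_n]/(g)\bigr)$, produce a derivation $\partial_v$ with $\partial_v(f_i)\in(g)$ for all $i$, and split according to whether it can be taken transverse to $V(g)$ --- is sound, and your $(ii)\Rightarrow(i)$ argument as well as the valuation computation in the transverse case (forcing $a\geqslant 2$ in $w(f)=g^au$) are correct. This is in substance the same mechanism that the present paper uses for its positive-characteristic analogue (Theorem~\ref{t1}), where the jacobian derivations $d_i$ of Example~\ref{e1} play the role of your $\partial_v$ and Propositions~\ref{p1} and~\ref{p2} supply the dichotomy. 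Note in passing that your transverse branch never actually uses that $P$ has height $1$: any irreducible $w\in P$ works there, so the only genuinely separate case is the one in which no transverse kernel vector exists, which forces $P$ to have height at least $2$.

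It is exactly there that the proof has a real gap. Reducing the problem to ``exhibit an irreducible polynomial in $P^m$'' via the $P$-primary ideal $P'=\ker\bigl(k[T]\to k[x]/(g^2)\bigr)$ is fine, but the existence of such an irreducible polynomial is the entire remaining content of the implication, and your candidate $W_0^{m}+W_1^{N}$ is not shown to be irreducible --- nor is it in general: already for $W_0=T_1$, $W_1=T_2$ and $\gcd(m,N)>1$ the polynomial $T_1^m+T_2^N$ factors over $\mathbb{C}$, and even with $\gcd(m,N)=1$ irreducibility is not preserved under the substitution $u\mapsto W_0$, $v\mapsto W_1$ (for instance $W_0+W_1^2$ with $W_0=T_2^2-T_1^2+1$, $W_1=T_1$ equals $(T_2+i)(T_2-i)$). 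What is needed is a lemma of the form ``every ideal of height at least $2$ in $k[T_1,\dots,T_n]$ contains an irreducible polynomial'' (or its primary-ideal variant), and that requires a genuine argument rather than a one-line construction. The difficulty is not incidental: in the characteristic-$p$ version proved in this paper the author does \emph{not} force a square in the corresponding degenerate case, but instead weakens the conclusion to the two divisibility relations of condition $(iii)$ of Theorem~\ref{t1}, precisely because the codimension-two situation cannot be disposed of by the derivation computation alone. Until the irreducibility statement is proved, $(i)\Rightarrow(ii)$ is not established.
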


It is remarkable that as consequence of the above theorem
we obtain the characterization of polynomial endomorphisms
satisfying the jacobian condition as those mapping
irreducible polynomials to square-free polynomials
(\cite{keller}, Theorem~5.1).

\medskip

In this paper we will obtain Freudenburg's lemma 
for $m$ polynomials in positive characteristic in two forms:
Proposition \ref{p2} and Theorem \ref{t1}.

\subsection*{The main theorem and the connection 
with the jacobian conjecture}

In Theorem \ref{t2} we obtain the final characterization
of $p$-bases of rings of constants of polynomial derivations.
In the case of $n$ polynomials in $n$ variables
it supplements the theorem of Nousiainen
with the condition $(3)$ of Theorem~\ref{t2}.
Hence, following Adjamagbo (\cite{Adjamagbo}, 
see \cite{Essen}, 10.3.16, p.~261),
we can reformulate the jacobian conjecture
in positive characteristic in the form:
\begin{quote}
"If polynomials $f_1,\dots,f_n\in \mathbb{F}_p[x_1,\dots,x_n]$
satisfy the condition $(3)$ of Theorem~\ref{t2} 
(with $K=\mathbb{F}_p$)
and $p$ does not divide the degree of the field extension
$\mathbb{F}_p(f_1,\dots,f_n)\subset \mathbb{F}_p(x_1,\dots,x_n)$,
then $\mathbb{F}_p[f_1,\dots,f_n]=\mathbb{F}_p[x_1,\dots,x_n]$."
\end{quote}
By the theorem of Adjamagbo (\cite{Adjamagbo}, 
see \cite{Essen}, Proposition 10.3.17, p.~261),
if the above property holds for all $n\geqslant 1$
and all primes $p$, then the jacobian conjecture is true.

\section{Rings of constants of derivations and $p$-bases}

Let $A$ be a domain (that is, a commutative ring with unity,
without zero divisors) of characteristic $p>0$.
Let $B$ be a subring of $A$, containing $A^p$,
where $A^p=\{a^p,\,a\in A\}$.
As the main example one may consider the polynomial algebra
$A=K[x_1,\dots,x_n]$ and its subalgebra $B=K[x_1^p,\dots,x_n^p]$,
where $K$ is a domain of characteristic $p>0$.

\medskip

We will use the multi-index notation.
Denote:
$$\Omega_m=\{(\alpha_1,\dots,\alpha_m),
0\leqslant\alpha_1,\dots,\alpha_m<p\}.$$
Given elements $f_1,\dots,f_m\in A$, $m\geqslant 1$,
and $\alpha=(\alpha_1,\dots,\alpha_m)\in\Omega_m$,
we put $f^{\alpha}=f_1^{\alpha_1}\ldots f_m^{\alpha_m}$.
If $\alpha=(0,\dots,0)$, then we put $f^{\alpha}=1$.

\medskip

Recall the definition of a $p$-basis (\cite{Matsumura}, p.~269).

\begin{df}
The elements $f_1,\dots,f_m\in A$ are called:

\medskip

\noindent
{\bf a)}
$p$-independent over $B$, if the elements of the form
$f^{\alpha}$, where $\alpha\in\Omega_m$,
are linearly independent over $B$,

\medskip

\noindent
{\bf b)}
a $p$-basis of $R$ over $B$,
where $R$ is a subring of $A$, containing $B$,
if the elements of the form $f^{\alpha}$,
where $\alpha\in\Omega_m$,
form a basis of $R$ as a $B$-module.
\end{df}

A single element $f\in A$ is $p$-independent
over $B$ if and only if $f\not\in B_0$,
where $B_0$ denotes the field of fractions of $B$,
and in this case the degree of the field extension
$B_0\subset B_0(f)$ equals $p$.
In general, the elements $f_1,\dots,f_m\in A$
are $p$-independent over $B$ if and only if
the degree of the field extension
$B_0\subset B_0(f_1,\dots,f_m)$ equals $p^m$.
The elements $f_1,\dots,f_m\in A$
form a $p$-basis of $R$ over $B$
if and only if they are $p$-independent over $B$
and generate $R$ as a $B$-algebra.
Note also that, if the elements $f_1,\dots,f_m\in A$
form a $p$-basis of $R$ over $B$,
then every element $a\in R$ can be uniquely presented
in the form
$$a=\sum_{\alpha\in\Omega_m}b_{\alpha}f^{\alpha},$$
where $b_{\alpha}\in B$ for $\alpha\in\Omega_m$.

\medskip

Given elements $f_1,\dots,f_m\in A$,
we define the following subring of $A$:
$$C_B(f_1,\dots,f_m)=B_0(f_1,\dots,f_m)\cap A=
B_0[f_1,\dots,f_m]\cap A.$$

\medskip

If $d$ is a derivation of $A$, then its kernel 
is called the ring of constants, and is denoted by $A^d$ 
(see \cite{polder} for a general reference 
on derivations and rings of constants).
Recall (\cite{p-homogeneous}, Theorem~1.1
and \cite{eigenvector}, Theorem~2.5,
see also \cite{noterings}, Theorem~3.1)
that every ring of constants $R$ 
of some $B$-derivation of $A$ satisfies the conditions 
$$B\subset R\hspace{3mm}\mbox{and}\hspace{3mm}R_0\cap A=R.$$
Observe that $C_B(f_1,\dots,f_m)$ is contained
in every ring of constants of a $B$-derivation,
containing the elements $f_1,\dots,f_m$.
If $A$ is finitely generated as a $B$-algebra,
then every subring $R\subset A$ 
satisfying the above conditions
is a ring of constants of some $B$-derivation of $A$.
Hence, under this assumption, $C_B(f_1,\dots,f_m)$
is the smallest (with respect to inclusion)
ring of constants of a $B$-derivation
containing the elements $f_1,\dots,f_m$
(in particular, $B_0\cap A$ is the smallest 
ring of constants of a $B$-derivation).
Moreover, in this case, every ring of constants 
of a $B$-derivation is of the form $C_B(f_1,\dots,f_m)$
for some elements $f_1,\dots,f_m\in A$,
which may be chosen $p$-independent over $B$.
For details, see \cite{eigenvector} and \cite{p-homogeneous}.
Note also that if $A$ is a $K$-algebra,
where $K$ is a domain of characteristic $p>0$,
then every $K$-derivation of $A$ is a $B$-derivation,
where $B=KA^p$ (and then $A^p\subset B$).

\medskip

We are especially interested in $p$-bases of rings of constants.
This condition may be formulated in several equivalent forms.

\begin{lm}
\label{l1}
Given arbitrary elements $f_1,\dots,f_m\in A$,
consider the following conditions:

\medskip

\noindent
$(1)$ \
$f_1,\dots,f_m$ form a $p$-basis (over $B$)
of the ring of constants of some $B$-derivation,

\medskip

\noindent
$(2)$ \
$f_1,\dots,f_m$ are $p$-independent over $B$ and
$B[f_1,\dots,f_m]$ is a ring of constants of some $B$-derivation,

\medskip

\noindent
$(3)$ \
$f_1,\dots,f_m$ are $p$-independent over $B$ and
$C_B(f_1,\dots,f_m)=B[f_1,\dots,f_m]$,

\medskip

\noindent
$(4)$ \
$f_1,\dots,f_m$ form a $p$-basis of $C_B(f_1,\dots,f_m)$ over $B$.

\medskip

\noindent
The following implications hold:
$$(1)\Leftrightarrow (2)\Rightarrow (3)\Leftrightarrow (4).$$

\smallskip

Moreover, if $A$ is finitely generated as a $B$-algebra,
then all the conditions $(1)$ -- $(4)$ are equivalent.
\end{lm}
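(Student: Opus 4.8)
The plan is to establish the two implications $(1)\Leftrightarrow(2)$ and $(3)\Leftrightarrow(4)$, then the single implication $(2)\Rightarrow(3)$, and finally to close the loop $(3)\Rightarrow(1)$ under the extra hypothesis that $A$ is finitely generated over $B$. The equivalence $(1)\Leftrightarrow(2)$ should be essentially a matter of unwinding the definition: if $f_1,\dots,f_m$ form a $p$-basis of a ring of constants $R$, then the monomials $f^\alpha$, $\alpha\in\Omega_m$, are linearly independent over $B$ (so $f_1,\dots,f_m$ are $p$-independent) and form a $B$-module basis of $R$, so in particular generate $R$ as a $B$-algebra, whence $R=B[f_1,\dots,f_m]$; conversely, $p$-independence gives linear independence of the $f^\alpha$, and the relations $f_i^p\in A^p\subset B$ show that the $f^\alpha$ already span $B[f_1,\dots,f_m]$ as a $B$-module, so they form a basis and $(1)$ holds. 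The equivalence $(3)\Leftrightarrow(4)$ is the same computation applied to the specific ring $R=C_B(f_1,\dots,f_m)$: once we know $C_B(f_1,\dots,f_m)=B[f_1,\dots,f_m]$, the spanning argument via $f_i^p\in B$ together with $p$-independence yields that the $f^\alpha$ form a $B$-basis of $C_B(f_1,\dots,f_m)$, and conversely a $p$-basis of $C_B(f_1,\dots,f_m)$ generates it as a $B$-algebra, giving the displayed equality.

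For $(2)\Rightarrow(3)$ I would use the two general facts quoted just before the lemma: every ring of constants $R$ of a $B$-derivation satisfies $R_0\cap A=R$, and $C_B(f_1,\dots,f_m)$ is contained in every ring of constants of a $B$-derivation that contains $f_1,\dots,f_m$. So if $R:=B[f_1,\dots,f_m]$ is a ring of constants, then on the one hand $C_B(f_1,\dots,f_m)\subseteq R$ (since $R$ contains the $f_i$), and on the other hand $R\subseteq R_0\cap A$ trivially, while $R_0=B_0(f_1,\dots,f_m)$ and $R\subseteq A$ give $R\subseteq B_0(f_1,\dots,f_m)\cap A=C_B(f_1,\dots,f_m)$. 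Combining the two inclusions yields $C_B(f_1,\dots,f_m)=B[f_1,\dots,f_m]$, which is exactly $(3)$ ($p$-independence is carried along as a hypothesis).

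Finally, for the converse direction under finite generation, I would invoke the stated converse to the characterization of rings of constants: if $A$ is finitely generated over $B$, then every subring $R$ with $B\subseteq R$ and $R_0\cap A=R$ is the ring of constants of some $B$-derivation of $A$. Applying this to $R=C_B(f_1,\dots,f_m)$ — which contains $B$ and satisfies $R_0\cap A=R$ because $R_0=B_0(f_1,\dots,f_m)$ and $C_B(f_1,\dots,f_m)$ is by definition $B_0(f_1,\dots,f_m)\cap A$ — shows that $C_B(f_1,\dots,f_m)$ is a ring of constants; combined with $(3)$'s equation $C_B(f_1,\dots,f_m)=B[f_1,\dots,f_m]$ this gives $(2)$, closing the cycle. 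I do not expect a serious obstacle here: the content is almost entirely bookkeeping around the two cited structural facts about rings of constants, and the one genuinely computational point — that $p$-independence plus $f_i^p\in B$ forces the $f^\alpha$ to be a $B$-basis of $B[f_1,\dots,f_m]$ — is a routine argument about the $B$-module generated by products of bounded powers. If anything needs care, it is making sure the "smallest ring of constants" statement is applied with the hypothesis ($A$ finitely generated over $B$) in exactly the right place, since without it only $(1)\Leftrightarrow(2)\Rightarrow(3)\Leftrightarrow(4)$ is claimed.
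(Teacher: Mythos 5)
Your proposal is correct and follows essentially the same route as the paper: $(1)\Leftrightarrow(2)$ and $(3)\Leftrightarrow(4)$ by unwinding the definition of a $p$-basis (using $f_i^p\in A^p\subset B$ for the spanning step), $(2)\Rightarrow(3)$ via the two inclusions $B[f_1,\dots,f_m]\subset C_B(f_1,\dots,f_m)$ and the minimality of $C_B(f_1,\dots,f_m)$ among rings of constants containing the $f_i$, and $(3)\Rightarrow(2)$ under finite generation from the fact that $C_B(f_1,\dots,f_m)$ is itself a ring of constants. You merely spell out a few details (e.g.\ that $(C_B(f_1,\dots,f_m))_0\cap A=C_B(f_1,\dots,f_m)$) that the paper leaves implicit.
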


\begin{proof}
$(1)\Rightarrow (2)$ \
If $f_1,\dots,f_m$ form a $p$-basis of a ring $R$,
then $R=B[f_1,\dots,f_m]$. 

\medskip

\noindent
$(2)\Rightarrow (1)$ \
If $f_1,\dots,f_m$ are $p$-independent over $B$,
then $f_1,\dots,f_m$ form a $p$-basis of $B[f_1,\dots,f_m]$.

\medskip

\noindent
$(2)\Rightarrow (3)$ \
Consider the ring $R=B[f_1,\dots,f_m]$. 
We have $R\subset C_B(f_1,\dots,f_m)$.
On the other hand, $C_B(f_1,\dots,f_m)$ is contained 
in every ring of constants of a $B$-derivation,
containing the elements $f_1,\dots,f_m$,
so if $R$ is a ring of constants of some $B$-derivation,
then $R=C_B(f_1,\dots,f_m)$.

\medskip

$(3)\Leftrightarrow (4)$ \
This equivalence follows directly from
the definition of a $p$-basis.

\medskip

If $A$ is finitely generated as a $B$-algebra,
the implication $(3)\Rightarrow (2)$ follows from 
the fact that $C_B(f_1,\dots,f_m)$ 
is a ring of constants of a $B$-derivation.
\end{proof}

The following technical lemma will be useful
in the proof of Lemma~\ref{l6}.
It is a generalization of Lemma~1.3 from~\cite{one-element}.

\begin{lm}
\label{l2}
Assume that $B_0\cap A=B$.
Let $f_1,\dots,f_m\in A$ be $p$-independent over $B$.
Then the following conditions are equivalent:

\medskip

\noindent
$(1)$ \
$C_B(f_1,\dots,f_m)=B[f_1,\dots,f_m]$,

\medskip

\noindent
$(2)$ \
for every $b\in B\setminus \{0\}$
and $(a_{\alpha}\in B,\alpha\in\Omega_m)$,
if $b\mid\sum_{\alpha\in\Omega_m}a_{\alpha}f^{\alpha}$,
then $b\mid a_{\alpha}$ for every $\alpha\in\Omega_m$.
\end{lm}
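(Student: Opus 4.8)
The plan is to work entirely inside the field $B_0(f_1,\dots,f_m)$, using the fact that $p$-independence makes the monomials $f^\alpha$, $\alpha\in\Omega_m$, a $B_0$-basis of this field, so that every element of $B_0[f_1,\dots,f_m]$ has a \emph{unique} representation $\sum_{\alpha}c_\alpha f^\alpha$ with $c_\alpha\in B_0$. Throughout I will use the hypothesis $B_0\cap A=B$ to compare ``coefficients in $B_0$'' with ``coefficients in $B$''.

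For $(1)\Rightarrow(2)$: suppose $b\in B\setminus\{0\}$ and $b\mid\sum_\alpha a_\alpha f^\alpha$ in $A$, say $\sum_\alpha a_\alpha f^\alpha=b g$ with $g\in A$. Then $g=\sum_\alpha (a_\alpha/b) f^\alpha\in B_0[f_1,\dots,f_m]\cap A=C_B(f_1,\dots,f_m)$, which by $(1)$ equals $B[f_1,\dots,f_m]$; so $g=\sum_\alpha b_\alpha f^\alpha$ with $b_\alpha\in B$. By uniqueness of the $B_0$-coefficients, $a_\alpha/b=b_\alpha\in B$ for every $\alpha$, i.e. $b\mid a_\alpha$. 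For $(2)\Rightarrow(1)$: the inclusion $B[f_1,\dots,f_m]\subseteq C_B(f_1,\dots,f_m)$ is automatic, so I must show the reverse. Take $a\in C_B(f_1,\dots,f_m)=B_0[f_1,\dots,f_m]\cap A$ and write $a=\sum_\alpha c_\alpha f^\alpha$ with $c_\alpha\in B_0$. Clearing denominators, choose $b\in B\setminus\{0\}$ with $b c_\alpha=:a_\alpha\in B$ for all $\alpha$; then $b a=\sum_\alpha a_\alpha f^\alpha$, so $b$ divides $\sum_\alpha a_\alpha f^\alpha$ in $A$, and condition $(2)$ gives $b\mid a_\alpha$ in $B$, hence $c_\alpha=a_\alpha/b\in B$ and $a\in B[f_1,\dots,f_m]$.

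I expect the only genuinely delicate point to be the bookkeeping around where coefficients live: one must be careful that ``$b\mid a_\alpha$'' in the statement of $(2)$ is divisibility \emph{in $B$} (equivalently in $B_0\cap A$, which is $B$ by hypothesis), and that the uniqueness of the expansion $\sum_\alpha c_\alpha f^\alpha$ is taken over $B_0$ — this is exactly the role of $p$-independence — so that matching coefficients of an identity $b g=\sum_\alpha a_\alpha f^\alpha$ term by term is legitimate. The hypothesis $B_0\cap A=B$ enters precisely at the step where, having shown $c_\alpha\in B_0$ and $c_\alpha=a_\alpha/b$ with $a_\alpha\in B$, one concludes $c_\alpha\in B$; without it one would only get $c_\alpha\in B_0\cap A$. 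Everything else is the routine ``clear denominators / compare coefficients'' manipulation, and no finite-generation assumption on $A$ over $B$ is needed here, unlike in Lemma~\ref{l1}.
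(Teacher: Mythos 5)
Your proposal is correct and follows essentially the same route as the paper: in $(1)\Rightarrow(2)$ you place $\sum_\alpha (a_\alpha/b)f^\alpha$ in $C_B(f_1,\dots,f_m)=B[f_1,\dots,f_m]$ and compare coefficients using linear independence of the $f^\alpha$ over $B_0$, and in $(2)\Rightarrow(1)$ you clear a common denominator and invoke $B_0\cap A=B$ exactly where the paper does. Your remarks on where each hypothesis is used (and that finite generation is not needed here) match the paper's proof as well.
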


\begin{proof}
$(1)\Rightarrow (2)$ \
Assume that $C_B(f_1,\dots,f_m)=B[f_1,\dots,f_m]$.
Consider $b\in B\setminus \{0\}$
and $(a_{\alpha}\in B,\alpha\in\Omega_m)$
such that $b\mid\sum_{\alpha\in\Omega_m}a_{\alpha}f^{\alpha}$.

\medskip

The element
$w=\sum_{\alpha\in\Omega_m}\frac{a_{\alpha}}{b}f^{\alpha}$
belongs to $B_0[f_1,\dots,f_m]$ and $A$,
so $w\in C_B(f_1,\dots,f_m)$.
By the assumption, $w\in B[f_1,\dots,f_m]$,
so $w=\sum_{\alpha\in\Omega_m}c_{\alpha}f^{\alpha}$,
where $c_{\alpha}\in B$ for $\alpha\in\Omega_m$.
The elements of the form $f^{\alpha}$, where $\alpha\in\Omega_m$,
are linearly independent over $B$, hence also over $B_0$.
Therefore, the equality
$$\sum_{\alpha\in\Omega_m}\frac{a_{\alpha}}{b}f^{\alpha}=
\sum_{\alpha\in\Omega_m}c_{\alpha}f^{\alpha}$$
yields that, for every $\alpha\in\Omega_m$,
$\frac{a_{\alpha}}{b}=c_{\alpha}$, that is, $b\mid a_{\alpha}$.

\medskip

\noindent
$(2)\Rightarrow (1)$ \
Assume that, for every $b\in B$
and $(a_{\alpha}\in B,\alpha\in\Omega_m)$,
if $b\mid\sum_{\alpha\in\Omega_m}a_{\alpha}f^{\alpha}$,
then $b\mid a_{\alpha}$ for every $\alpha\in\Omega_m$.
Consider arbitrary element $w\in C_B(f_1,\dots,f_m)$,
$w=\sum_{\alpha\in\Omega_m}c_{\alpha}f^{\alpha}$,
where $c_{\alpha}\in B_0$ for $\alpha\in\Omega_m$.
Of course, we can present each $c_{\alpha}$
as $\frac{a_{\alpha}}{b}$,
where $a_{\alpha}\in B$ for $\alpha\in\Omega_m$
and $b$ is a common denominator for all $\alpha\in\Omega_m$,
$b\in B$.
Since $w\in A$, we have
$b\mid\sum_{\alpha\in\Omega_m}a_{\alpha}f^{\alpha}$.
Then, by the assumption, for every $\alpha\in\Omega_m$,
$b\mid a_{\alpha}$, that is, $c_{\alpha}\in A$,
so $c_{\alpha}\in B$ (because $B_0\cap A=B$).
Finally, $w\in B[f_1,\dots,f_m]$.
\end{proof}

The next lemma follows directly from the proof 
of Proposition 3.3 a) in~\cite{eigenvector}.

\begin{lm}
\label{l3}
If the elements $f_1,\dots,f_m\in A$
form a $p$-basis of $C_B(f_1,\dots,f_m)$ over $B$,
then $B_0\cap A=B$.
\end{lm}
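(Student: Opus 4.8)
The plan is to prove the nontrivial inclusion $B_0\cap A\subseteq B$, the reverse inclusion $B\subseteq B_0\cap A$ being immediate. So I fix an arbitrary element $a\in B_0\cap A$ and aim to show $a\in B$.

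First I would observe that $a$ in fact lies in $R:=C_B(f_1,\dots,f_m)$. Indeed, $a\in B_0\subseteq B_0(f_1,\dots,f_m)$ and $a\in A$, so $a\in B_0(f_1,\dots,f_m)\cap A=R$ by definition of $C_B$. By hypothesis the elements $f^{\alpha}$ with $\alpha\in\Omega_m$ form a basis of $R$ as a $B$-module, so $a$ admits a (unique) expansion $a=\sum_{\alpha\in\Omega_m}b_{\alpha}f^{\alpha}$ with $b_{\alpha}\in B$.

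The key step is to compare this expansion with the one in the field $B_0(f_1,\dots,f_m)$. Since $f_1,\dots,f_m$ are $p$-independent over $B$, the $p^m$ monomials $f^{\alpha}$ are linearly independent over $B$, hence — clearing a common denominator in any hypothetical $B_0$-linear relation and using that $A$, and so $B_0$, is a domain — also linearly independent over $B_0$; being $p^m$ in number, and $[B_0(f_1,\dots,f_m):B_0]=p^m$, they form a $B_0$-basis. Now $a\in B_0$ has the two expansions $a=a\cdot f^{(0,\dots,0)}$ and $a=\sum_{\alpha\in\Omega_m}b_{\alpha}f^{\alpha}$ with respect to this basis, so by uniqueness $b_{(0,\dots,0)}=a$ and $b_{\alpha}=0$ for $\alpha\neq(0,\dots,0)$. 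Therefore $a=b_{(0,\dots,0)}\in B$, which is what we wanted.

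There is no genuine obstacle here; the only point needing a line of justification is the passage from $B$-linear independence of the monomials $f^{\alpha}$ to their $B_0$-linear independence, which is the routine denominator-clearing argument in a domain. (Alternatively, one may simply invoke the computation carried out in the proof of Proposition~3.3\,a) of~\cite{eigenvector}.)
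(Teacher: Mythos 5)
Your proof is correct. Note that the paper itself gives no argument for this lemma --- it simply defers to the proof of Proposition~3.3\,a) in~\cite{eigenvector} --- so there is no in-paper proof to compare against; your self-contained argument (place $a\in B_0\cap A$ inside $C_B(f_1,\dots,f_m)$, expand it in the $B$-module basis $\{f^{\alpha}\}$, upgrade $B$-linear independence of the $f^{\alpha}$ to $B_0$-linear independence by clearing denominators in the domain $A_0$, and compare coefficients with the trivial expansion $a\cdot f^{(0,\dots,0)}$) is exactly the kind of computation being invoked there, and every step checks out against the definitions and facts stated in Section~2.
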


\section{A generalization of Freudenburg's lemma}

Throughout this section $K$ is a UFD of characteristic $p>0$
and $A=K[x_1,\dots,x_n]$ is the polynomial $K$-algebra
in $n$ variables. We put $B=K[x_1^p,\dots,x_n^p]$.
We consider arbitrary polynomials $f_1,\dots,f_m\in A$,
where $m\geqslant 1$.
We also denote:
$R_i=B[f_1,\dots,\widehat{f_i},\dots,f_m]$
for $i\in\{1,\dots,m\}$ and
$R_{ij}=B[f_1,\dots,\widehat{f_i},\dots,
\widehat{f_j},\dots,f_m]$
for $i,j\in\{1,\dots,m\}$, $i\neq j$,
where $\widehat{f_i}$ means that
the element $f_i$ is omitted.

\medskip

For arbitrary $j_1,\dots,j_m\in\{1,\dots,n\}$
we denote by $\jac^{f_1,\dots,f_m}_{j_1,\dots,j_m}$
the jacobian determinant of $f_1,\dots,f_m$
with respect to $x_{j_1},\dots,x_{j_m}$.
By $\ijac(f_1,\dots,f_m)$ we denote
the ideal generated by all determinants of the form
$\jac^{f_1,\dots,f_m}_{j_1,\dots,j_m}$,
where $j_1,\dots,j_m\in\{1,\dots,n\}$.
Moreover, following \cite{jaccond},
we introduce the notion of a differential gcd of $f_1,\dots,f_m$:
$$\dgcd(f_1,\dots,f_m)=
\gcd\left(\jac^{f_1,\dots,f_m}_{j_1,\dots,j_m},\;
j_1,\dots,j_m\in\{1,\dots,n\}\right).$$
If $\jac^{f_1,\dots,f_m}_{j_1,\dots,j_m}=0$
for every $j_1,\dots,j_m\in\{1,\dots,n\}$,
then we put $\dgcd(f_1,$ $\dots,$ $f_m)=0$.

\begin{ex}
\label{e1}
Consider arbitrary $j_1,\dots,j_m\in\{1,\dots,n\}$
and $i\in\{1,\dots,m\}$.
Denote by $d_i$ the $K$-derivation of $A$
defined by $$d_i(f)=
\jac^{f_1,\dots,f_{i-1},f,f_{i+1},\dots,f_m}_{j_1,\dots,j_m}$$
for $f\in A$.
Observe that $d_i(f_i)=\jac^{f_1,\dots,f_m}_{j_1,\dots,j_m}$
and $d_i(f_j)=0$ for $j\neq i$.
Hence, $R_i\subset A^{d_i}$.
\end{ex}

The statement a) in the following lemma
is a consequence of the generalized Laplace Expansion Formula.
The statements b) and c) follow directly from a),
compare the proof of implication $(i)\Rightarrow (iii)$
in \cite{jaccond}, Theorem~2.3.

\begin{lm}
\label{l4}
Consider arbitrary pairwise different
$i_1,\dots,i_r\in\{1,\dots,m\}$, where $1\leqslant r\leqslant m$.
Then:

\medskip

\noindent
{\bf a)}
$\ijac(f_1,\dots,f_m)\subset\ijac(f_{i_1},\dots,f_{i_r})$,

\medskip

\noindent
{\bf b)}
$\dgcd(f_{i_1},\dots,f_{i_r})\mid\dgcd(f_1,\dots,f_m)$,
whenever $\dgcd(f_{i_1},\dots,f_{i_r})\neq 0$,

\medskip

\noindent
{\bf c)}
$\dgcd(f_1,\dots,f_m)=0$ if $\dgcd(f_{i_1},\dots,f_{i_r})=0$.
\end{lm}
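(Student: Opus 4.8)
The plan is to prove a) via the generalized Laplace expansion and then obtain b) and c) from it formally. Since permuting $f_1,\dots,f_m$ changes every jacobian minor only by a sign (and $\gcd$ in a UFD is taken up to associates anyway), we may reindex so that $i_1<\dots<i_r$.

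To prove a), fix $j_1,\dots,j_m\in\{1,\dots,n\}$. If these are not pairwise distinct then $\jac^{f_1,\dots,f_m}_{j_1,\dots,j_m}=0$, which lies in $\ijac(f_{i_1},\dots,f_{i_r})$ trivially, so we may assume $1\leqslant j_1<\dots<j_m\leqslant n$. Put $J=\big[\frac{\partial f_k}{\partial x_{j_l}}\big]_{1\leqslant k,l\leqslant m}$, so that $\det J=\jac^{f_1,\dots,f_m}_{j_1,\dots,j_m}$. Expanding $\det J$ by the generalized Laplace formula along the rows numbered $i_1,\dots,i_r$ yields
$$\jac^{f_1,\dots,f_m}_{j_1,\dots,j_m}
=\sum_{1\leqslant l_1<\dots<l_r\leqslant m}\varepsilon_{l_1,\dots,l_r}\,
\jac^{f_{i_1},\dots,f_{i_r}}_{j_{l_1},\dots,j_{l_r}}\,M_{l_1,\dots,l_r},$$
where $\varepsilon_{l_1,\dots,l_r}=\pm 1$ is the standard Laplace sign and $M_{l_1,\dots,l_r}\in A$ is the complementary minor of $J$ obtained by deleting the rows $i_1,\dots,i_r$ and the columns $l_1,\dots,l_r$. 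Each factor $\jac^{f_{i_1},\dots,f_{i_r}}_{j_{l_1},\dots,j_{l_r}}$ is one of the generators of $\ijac(f_{i_1},\dots,f_{i_r})$ and each $M_{l_1,\dots,l_r}$ is a polynomial, so $\jac^{f_1,\dots,f_m}_{j_1,\dots,j_m}\in\ijac(f_{i_1},\dots,f_{i_r})$. As $j_1,\dots,j_m$ were arbitrary, this gives $\ijac(f_1,\dots,f_m)\subset\ijac(f_{i_1},\dots,f_{i_r})$.

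For b), let $d=\dgcd(f_{i_1},\dots,f_{i_r})\neq 0$. Then $d$ divides every generator $\jac^{f_{i_1},\dots,f_{i_r}}_{j_1,\dots,j_r}$ of $\ijac(f_{i_1},\dots,f_{i_r})$, hence $d$ divides every element of that ideal; by a) it divides every determinant $\jac^{f_1,\dots,f_m}_{j_1,\dots,j_m}$, and therefore $d\mid\dgcd(f_1,\dots,f_m)$, which also holds when $\dgcd(f_1,\dots,f_m)=0$ since everything divides $0$. For c), if $\dgcd(f_{i_1},\dots,f_{i_r})=0$ then every determinant $\jac^{f_{i_1},\dots,f_{i_r}}_{j_1,\dots,j_r}$ vanishes, so $\ijac(f_{i_1},\dots,f_{i_r})=0$; by a), $\ijac(f_1,\dots,f_m)=0$ as well, meaning every $\jac^{f_1,\dots,f_m}_{j_1,\dots,j_m}$ vanishes, so $\dgcd(f_1,\dots,f_m)=0$. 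The only step that needs attention is the index and sign bookkeeping in the Laplace expansion; once a) is in place, b) and c) are immediate, since a common divisor of all generators of an ideal divides every element of it.
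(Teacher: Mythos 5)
Your proof is correct and follows the same route the paper indicates: part a) by the generalized Laplace expansion of $\jac^{f_1,\dots,f_m}_{j_1,\dots,j_m}$ along the rows indexed by $i_1,\dots,i_r$, with b) and c) then following formally since a common divisor of the generators of an ideal divides every element of it. The paper gives no more detail than this (it only cites the Laplace formula and the proof of $(i)\Rightarrow(iii)$ in its reference), so your write-up is in fact a complete version of the intended argument.
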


Let $Q$ be a prime ideal of $A$.
We denote by $\overline{A}$ the factor algebra $A/Q$.
For arbitrary element $a\in A$ we denote by $\overline{a}$
the coset of $a$ in $\overline{A}$, that is, $\overline{a}=a+Q$.
For a subring $T\subset A$ we denote by $\overline{T}$
the canonical homomorphic image of $T$ in $\overline{A}$.

\begin{lm}
\label{l5}
Consider a subring $T\subset A$ such that $B\subset T$.
Given a polynomial $f\in A$,
the element $\overline{f}$ is $p$-dependent over $\overline{T}$
if and only if there exist $b,c\in T$, $b\not\in Q$,
such that $bf+c\in Q$.
\end{lm}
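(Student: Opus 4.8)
The plan is to reduce the claim to the characterization of $p$-independence of a single element recalled in the previous section, applied to the quotient domain $\overline{A}=A/Q$ and its subring $\overline{T}$. First I would verify that this characterization is available for the pair $(\overline{A},\overline{T})$: the ring $\overline{A}$ is a domain of characteristic $p$ since $Q$ is prime, and the canonical surjection $A\to\overline{A}$ maps $A^p$ onto $\overline{A}^p$, so from $A^p\subseteq B\subseteq T$ we get $\overline{A}^p\subseteq\overline{T}$. Consequently a single element $\overline{f}$ is $p$-independent over $\overline{T}$ exactly when $\overline{f}$ does not lie in the field of fractions $\overline{T}_0$ (taken inside the field of fractions of $\overline{A}$); equivalently, $\overline{f}$ is $p$-\emph{dependent} over $\overline{T}$ precisely when $\overline{f}\in\overline{T}_0$.

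It then remains to translate ``$\overline{f}\in\overline{T}_0$'' back into a divisibility statement in $A$. Since $\overline{T}$ is a domain, its nonzero elements are exactly the cosets $\overline{b}$ with $b\in T\setminus Q$. Thus $\overline{f}\in\overline{T}_0$ means $\overline{f}=\overline{u}\,\overline{b}^{\,-1}$ for some $u\in T$ and $b\in T\setminus Q$, i.e. $\overline{b\,f-u}=0$ in $\overline{A}$, i.e. $bf-u\in Q$; putting $c=-u$ yields $b,c\in T$, $b\notin Q$, $bf+c\in Q$. Conversely, if $b,c\in T$ with $b\notin Q$ satisfy $bf+c\in Q$, then $\overline{b}\neq0$ and $\overline{f}=-\overline{c}\,\overline{b}^{\,-1}\in\overline{T}_0$, so $\overline{f}$ is $p$-dependent over $\overline{T}$. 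In this last direction one does not even need the full criterion: $\overline{b}\,\overline{f}+\overline{c}=0$ is already a nontrivial $\overline{T}$-linear relation between $1$ and $\overline{f}$.

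The one point that requires care is the justification that the single-element $p$-independence criterion applies verbatim in $\overline{A}$: it rests on having $\overline{f}^{\,p}\in\overline{T}_0$, which is guaranteed here because $\overline{f}^{\,p}\in\overline{A}^p\subseteq\overline{T}$, and this in turn uses the hypothesis $B\subseteq T$ (hence $A^p\subseteq T$). Everything else is routine manipulation of cosets and fractions, so I expect no real obstacle beyond this bookkeeping.
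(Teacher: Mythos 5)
Your argument is correct and is essentially the paper's own proof: both reduce the statement to the characterization of single-element $p$-dependence as membership in the fraction field $(\overline{T})_0$, and then translate $\overline{f}=-\overline{c}/\overline{b}$ into $bf+c\in Q$ with $b\notin Q$. Your extra care in checking that $\overline{A}$ is a domain of characteristic $p$ with $\overline{A}^p\subseteq\overline{T}$ is a worthwhile explicit justification of what the paper leaves implicit, but it does not change the route.
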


\begin{proof}
The element $\overline{f}$ is $p$-dependent over $\overline{T}$
if and only if it belongs to $(\overline{T})_0$,
that is, it can be presented in the form
$-\frac{\overline{c}}{\overline{b}}$ for some $b,c\in T$
such that $\overline{b}\neq\overline{0}$,
that is, $b\not\in Q$.
We obtain the equality
$\overline{b}\cdot\overline{f}+\overline{c}=\overline{0}$,
hence $bf+c\in Q$.
\end{proof}

The following proposition is 
a positive characteristic analog 
(for arbitrary number of polynomials)
of Lemma~3.1 from~\cite{charoneel}.

\begin{pr}
\label{p1}
{\bf a)}
The inclusion $\ijac(f_1,\dots,f_m)\subset Q$
holds if and only if the following condition
is satisfied for some $i\in\{1,\dots,m\}$:

\medskip

\noindent
$(\ast)$ \
there exist $s_1,\dots,s_m\in A$, where $s_i\not\in Q$,
such that $s_1d(f_1)+\ldots+s_md(f_m)\in Q$
for every $K$-derivation $d$ of $A$.

\medskip

\noindent
{\bf b)}
If the above condition $(\ast)$ is satisfied
for a given $i\in\{1,\dots,m\}$,
then the element $\overline{f_i}$
is $p$-dependent over $\overline{R_i}$.
\end{pr}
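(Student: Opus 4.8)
The plan is to reduce everything to linear algebra and field theory over the domain $\overline{A}=A/Q$, whose field of fractions I denote $E=(\overline{A})_0$. I will use two structural facts. First, $\operatorname{Der}_K(A)$ is the free $A$-module on $\partial/\partial x_1,\dots,\partial/\partial x_n$, and every $K$-derivation satisfies $d(f)=\sum_j d(x_j)\,\partial f/\partial x_j$. Second, for every $g\in A$ one has $g^p\in B$, hence $(\overline{g})^p=\overline{g^p}\in\overline{B}$, so $E^p\subset(\overline{B})_0=:k_0$; thus $E/k_0$ is purely inseparable of exponent at most one.

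For a) I first observe that $(\ast)$ for a fixed $i$ is equivalent to the existence of $s_1,\dots,s_m\in A$ with $s_i\notin Q$ and $\sum_k s_k\,\partial f_k/\partial x_j\in Q$ for $j=1,\dots,n$: one direction specializes $d$ to the partials, the other is the chain rule above. Granting this, suppose $(\ast)$ holds for $i$. For each tuple $(j_1,\dots,j_m)$ the $K$-derivation $d_i$ of Example~\ref{e1} kills every $f_k$ with $k\neq i$ and sends $f_i$ to $\jac^{f_1,\dots,f_m}_{j_1,\dots,j_m}$, so $s_i\,\jac^{f_1,\dots,f_m}_{j_1,\dots,j_m}=\sum_k s_k d_i(f_k)\in Q$; since $\overline{A}$ is a domain and $s_i\notin Q$, we get $\jac^{f_1,\dots,f_m}_{j_1,\dots,j_m}\in Q$, and as the tuple is arbitrary, $\ijac(f_1,\dots,f_m)\subset Q$. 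Conversely, if $\ijac(f_1,\dots,f_m)\subset Q$, then every maximal minor of the $m\times n$ matrix $\bigl[\overline{\partial f_k/\partial x_j}\bigr]$ over $E$ vanishes, so its rows are $E$-linearly dependent; clearing denominators and lifting back to $A$ produces $s_1,\dots,s_m\in A$, not all in $Q$, with $\sum_k s_k\,\partial f_k/\partial x_j\in Q$ for all $j$, which is the equivalent form of $(\ast)$ for any $i$ with $s_i\notin Q$.

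For b) the crucial reduction is: if $(\ast)$ holds for $i$, then \emph{every} $K$-derivation $D$ of $A$ with $D(f_k)\in Q$ for all $k\neq i$ also has $D(f_i)\in Q$ — substitute $D$ into $(\ast)$ and again use that $\overline{A}$ is a domain with $s_i\notin Q$. So it suffices to prove the contrapositive: if $\overline{f_i}$ is $p$-independent over $\overline{R_i}$, i.e. $\overline{f_i}\notin(\overline{R_i})_0$, then some $K$-derivation $D$ of $A$ satisfies $D(f_k)\in Q$ for $k\neq i$ but $D(f_i)\notin Q$. Now $(\overline{R_i})_0=k_0(\overline{f_k}:k\neq i)=:L_i$, and $E^p\subset k_0\subset L_i$, so $E/L_i$ is purely inseparable of exponent at most one; since $\overline{f_i}\notin L_i$ while $(\overline{f_i})^p\in L_i$, the singleton $\{\overline{f_i}\}$ is $p$-independent over $L_i$ and extends to a $p$-basis of $E$ over $L_i$ (see \cite{Matsumura}), so there is an $L_i$-derivation $\delta\colon E\to E$ with $\delta(\overline{f_i})=1$. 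Pick $\overline{w}\in\overline{A}\setminus\{0\}$ with $\overline{w}\,\delta(\overline{x_j})\in\overline{A}$ for every $j$, choose lifts $u_j\in A$ of $\overline{w}\,\delta(\overline{x_j})$, and set $D=\sum_j u_j\,\partial/\partial x_j\in\operatorname{Der}_K(A)$. Applying the chain rule to $\delta$ gives $\overline{D(f)}=\overline{w}\,\delta(\overline{f})$ for all $f\in A$; since $\delta$ vanishes on $L_i$ (which contains $\overline{R_i}$) we get $D(f_k)\in Q$ for $k\neq i$, whereas $\overline{D(f_i)}=\overline{w}\,\delta(\overline{f_i})=\overline{w}\neq 0$, so $D(f_i)\notin Q$. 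This contradicts the reduction, hence $\overline{f_i}$ is $p$-dependent over $\overline{R_i}$.

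The heart of the argument, and the step I expect to be the main obstacle, is this construction of $D$. The object that genuinely detects the $p$-independence of $\overline{f_i}$ over $\overline{R_i}$ is the derivation $\delta$ of the field $E$, and in general $\delta$ does not come from a $K$-derivation of $A$ stabilizing $Q$; the point is that one does not need it to — the identity $\overline{D(f)}=\overline{w}\,\delta(\overline{f})$ already transports the relevant (non)vanishing of $\delta$ on the $f_k$ back to $A$ modulo $Q$. The remaining ingredients (the maximal-minor/rank computation in a), the clearing of denominators, and the facts on $p$-bases of purely inseparable field extensions) are standard or routine.
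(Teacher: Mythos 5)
Your proof is correct. Part a) coincides with the paper's argument: the paper likewise translates $\ijac(f_1,\dots,f_m)\subset Q$ into the statement that the matrix $\bigl[\,\overline{\partial f_k/\partial x_j}\,\bigr]$ has rank less than $m$ over the fraction field of $\overline{A}$, and passes between a vanishing row combination and $(\ast)$ exactly as you do (delegating the details to Lemma~3.1~a) of the cited companion paper). For part b) you take a genuinely different, more self-contained route. The paper argues in the forward direction: assuming $(\ast)$, it shows (again by citation) that every $\overline{R_i}$-derivation of $\overline{A}$ annihilates $\overline{f_i}$, and then invokes the structure theory of Section~2 — for the finitely generated algebra $\overline{A}$ the smallest ring of constants of an $\overline{R_i}$-derivation is $(\overline{R_i})_0\cap\overline{A}$ — to conclude $p$-dependence. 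You instead prove the contrapositive: from $\overline{f_i}\notin(\overline{R_i})_0=L_i$ you build a dual derivation $\delta$ of the exponent-one purely inseparable extension $E/L_i$ with $\delta(\overline{f_i})=1$, clear denominators, and lift to a $K$-derivation $D$ of $A$ violating $(\ast)$; your observation that $D$ need not stabilize $Q$ because only the identity $\overline{D(f)}=\overline{w}\,\delta(\overline{f})$ is needed is exactly the right point, and the hypotheses it requires ($\delta$ kills $\overline{K}$ since $K\subset B$, and $\overline{w}\neq\overline{0}$ in the domain $\overline{A}$) all hold. Both arguments rest on the same duality between derivations and $p$-independence; yours makes explicit, via the standard extension of a $p$-independent singleton to a $p$-basis, the ingredient that the paper outsources to its references, at the cost of a slightly longer write-up.
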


\begin{proof}
{\bf a)}
The condition $\ijac(f_1,\dots,f_m)\subset Q$ holds
if and only if the rank of the matrix
\arstd
$$\left[\begin{array}{cccc}
\overline{\,\frac{\partial f_1}{\partial x_1}\,}&
\overline{\,\frac{\partial f_1}{\partial x_2}\,}&\cdots&
\overline{\,\frac{\partial f_1}{\partial x_n}\,}\\
\overline{\,\frac{\partial f_2}{\partial x_1}\,}&
\overline{\,\frac{\partial f_2}{\partial x_2}\,}&\cdots&
\overline{\,\frac{\partial f_2}{\partial x_n}\,}\\
\vdots&\vdots&&\vdots\\
\overline{\,\frac{\partial f_m}{\partial x_1}\,}&
\overline{\,\frac{\partial f_m}{\partial x_2}\,}&\cdots&
\overline{\,\frac{\partial f_m}{\partial x_n}\,}
\end{array}\right]$$
\arstz
over the field $A_0$ is less than $m$.
Repeating the arguments from the proof
of Lemma 3.1 a) in \cite{keller},
we obtain that this is equivalent to $(\ast)$.

\medskip

\noindent
{\bf b)}
Assume that $(\ast)$ holds for a given $i$.
Consider arbitrary $\overline{R_i}$-derivation 
$\delta$ of $\overline{A}$.
Repeating the arguments from the proof
of Lemma 3.1 b) in \cite{keller}, we obtain that $\delta(\overline{f_i})=\overline{0}$.
Hence, $\overline{f_i}$ belongs to the smallest
ring of constants of a $\overline{R_i}$-derivation 
of $\overline{A}$, that is, $\overline{f_i}\in 
(\overline{R_i})_0\cap \overline{A}$, so 
$\overline{f_i}$ is $p$-dependent over $\overline{R_i}$.
\end{proof}

Proposition \ref{p2} and Theorem \ref{t1}
are two forms of the positive characteristic
Freudenburg's lemma for $m$ polynomials.
The first one follows from
Lemma~\ref{l5} and Proposition~\ref{p1}.
It is a generalization of Theorem~3.1 b)
from~\cite{charoneel}.

\begin{pr}
\label{p2}
Consider arbitrary polynomials 
$f_1,\dots,f_m\in K[x_1,\dots,x_n]$,
where $K$ is a UFD of characteristic $p>0$
and $m,n\geqslant 1$.
If $\ijac(f_1,\dots,f_m)\subset Q$,
where $Q$ is a prime ideal of $A$,
then there exist $i\in\{1,\dots,m\}$ and 
$$b,c\in K[x_1^p,\dots,x_n^p,f_1,\dots,\widehat{f_i},\dots,f_m],$$
$b\not\in Q$, such that $bf_i+c\in Q$.
\end{pr}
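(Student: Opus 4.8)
The plan is to derive Proposition~\ref{p2} almost immediately from the two results that precede it in this section, namely Proposition~\ref{p1} and Lemma~\ref{l5}. The chain of reasoning is short: the hypothesis $\ijac(f_1,\dots,f_m)\subset Q$ is exactly the left-hand side of Proposition~\ref{p1}~a), so it yields an index $i\in\{1,\dots,m\}$ for which the condition $(\ast)$ holds. Then Proposition~\ref{p1}~b) tells us that $\overline{f_i}$ is $p$-dependent over $\overline{R_i}$, where $R_i=B[f_1,\dots,\widehat{f_i},\dots,f_m]=K[x_1^p,\dots,x_n^p,f_1,\dots,\widehat{f_i},\dots,f_m]$.

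Next I would invoke Lemma~\ref{l5} with the subring $T=R_i$. Note that $R_i$ contains $B=K[x_1^p,\dots,x_n^p]$, so the hypothesis $B\subset T$ of Lemma~\ref{l5} is satisfied. Applying the lemma to the polynomial $f=f_i$: since $\overline{f_i}$ is $p$-dependent over $\overline{R_i}=\overline{T}$, there exist $b,c\in T=R_i$ with $b\notin Q$ and $bf_i+c\in Q$. Rewriting $R_i$ explicitly as $K[x_1^p,\dots,x_n^p,f_1,\dots,\widehat{f_i},\dots,f_m]$ gives precisely the conclusion of the proposition.

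The only genuine content is already packaged in Proposition~\ref{p1}, whose proof in turn defers to the arguments of Lemma~3.1 in~\cite{keller}; from the vantage point of this section, Proposition~\ref{p2} is a formal corollary and there is no serious obstacle. The one small point worth stating carefully is the translation between the ``$p$-dependence of $\overline{f_i}$ over $\overline{R_i}$'' language of Proposition~\ref{p1}~b) and the ``$bf_i+c\in Q$'' language of the conclusion, which is exactly the equivalence recorded in Lemma~\ref{l5}; once that is observed, the proof is a two-line composition of the cited statements. I would also remark, for completeness, that the case $m=1$ recovers Theorem~3.1~b) of~\cite{charoneel}, since then $R_1=B=K[x_1^p,\dots,x_n^p]$ and the extra constraint $\gcd(b,c)\sim1$ there can be arranged by cancelling common factors (though that refinement is not needed for the statement as given here).
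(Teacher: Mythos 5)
Your proposal is correct and coincides with the paper's own derivation: the author explicitly states that Proposition~\ref{p2} ``follows from Lemma~\ref{l5} and Proposition~\ref{p1},'' which is exactly the two-step composition you describe (Proposition~\ref{p1}~a) and b) to get $p$-dependence of $\overline{f_i}$ over $\overline{R_i}$, then Lemma~\ref{l5} with $T=R_i$ to translate this into $bf_i+c\in Q$ with $b\notin Q$). Nothing is missing.
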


The following theorem is a generalization
of Proposition~3.3 b) from~\cite{charoneel}
and a positive characteristic analog
of Theorem~4.1 from~\cite{keller}.

\begin{tw}
\label{t1}
Let $A=K[x_1,\dots,x_n]$ be the polynomial $K$-algebra,
where $K$ is a UFD of characteristic $p>0$ and $n\geqslant 1$.
Put $B=K[x_1^p,\dots,x_n^p]$.
Consider arbitrary polynomials $f_1,\dots,f_m\in A$,
where $m\geqslant 1$,
and denote $R_i=B[f_1,\dots,\widehat{f_i},\dots,f_m]$
and $R_{ij}=B[f_1,\dots,\widehat{f_i},\dots,
\widehat{f_j},\dots,f_m]$, $i\neq j$.

\medskip

Then $\dgcd(f_1,\dots,f_m)$
is divisible by an irreducible polynomial $g\in A$
if and only if at least one of
the following conditions hold:

\smallskip

\noindent
$(i)$ \
$g\not\in B$ and $g^2\mid bf_i+c$ for some $i\in\{1,\dots,m\}$
and $b,c\in R_i$ such that $g\nmid b$,

\smallskip

\noindent
$(ii)$ \
$g\in B$ and $g\mid bf_i+c$ for some $i\in\{1,\dots,m\}$
and $b,c\in R_i$ such that $g\nmid b$,

\smallskip

\noindent
$(iii)$ \
$g\mid b_1f_i+c_1$ and $g\mid b_2f_j+c_2$
for some $i,j\in\{1,\dots,m\}$, $i\neq j$,
and $b_1,b_2,c_1,c_2\in R_{ij}$
such that $g\nmid b_1$ and $g\nmid b_2$.
\end{tw}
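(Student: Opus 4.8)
The plan is to put $Q=(g)$, which is a prime ideal of $A$ because $g$ is irreducible in the UFD $A$, and to note that ``$g\mid\dgcd(f_1,\dots,f_m)$'' is equivalent to $\ijac(f_1,\dots,f_m)\subseteq Q$: a greatest common divisor of the jacobians is divisible by $g$ precisely when every jacobian lies in $Q$. Two elementary facts will be used repeatedly. First, if $g\notin B=K[x_1^p,\dots,x_n^p]$ then $\frac{\partial g}{\partial x_l}\ne 0$ for some $l$, and then $g\nmid\frac{\partial g}{\partial x_l}$ by comparing degrees, so the partials of $g$ do not all lie in $Q$. Second, for a fixed multi-index $j_1,\dots,j_m$ and a fixed $i$, the derivation $d_i$ of Example~\ref{e1} kills $B$ and every $f_k$ with $k\ne i$ — hence kills $R_i$ — and sends $f_i$ to $J:=\jac^{f_1,\dots,f_m}_{j_1,\dots,j_m}$; by the Leibniz rule $g\mid d_i(u)$ whenever either $g^2\mid u$, or $g\mid u$ and $g\in B$.

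To prove that each of $(i)$, $(ii)$, $(iii)$ implies $g\mid\dgcd(f_1,\dots,f_m)$, I would argue as follows. In case $(ii)$: $bJ=d_i(bf_i+c)\in Q$ by the second fact (since $g\in B$), and as $g$ is prime with $g\nmid b$ it divides $J$ for every multi-index, i.e.\ $g\mid\dgcd(f_1,\dots,f_m)$; case $(i)$ is identical with ``$g^2\mid bf_i+c$'' in place of ``$g\in B$''. In case $(iii)$: if $g\in B$ the relation $g\mid b_1f_i+c_1$ with $b_1,c_1\in R_{ij}\subseteq R_i$ is an instance of $(ii)$; if $g\notin B$ and $g^2$ divides $b_1f_i+c_1$ or $b_2f_j+c_2$, it is an instance of $(i)$; otherwise, write $b_1f_i+c_1=gp_1$, $b_2f_j+c_2=gp_2$ with $g\nmid p_1$, $g\nmid p_2$ and differentiate both by an arbitrary $K$-derivation $d$ of $A$. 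Since $b_s,c_s\in R_{ij}$ and $d$ kills $B$, the elements $d(b_s),d(c_s)$ are $A$-combinations of the $d(f_l)$ with $l\ne i,j$, so the relations take the form $b_1d(f_i)-p_1d(g)+\sum_{l\ne i,j}u^{(1)}_ld(f_l)\in Q$ and $b_2d(f_j)-p_2d(g)+\sum_{l\ne i,j}u^{(2)}_ld(f_l)\in Q$; multiplying the first by $p_2$, the second by $p_1$ and subtracting cancels the $d(g)$ terms and yields $\sum_ls_ld(f_l)\in Q$ for all $d$, with $s_i=p_2b_1\notin Q$. This is condition $(\ast)$ of Proposition~\ref{p1}, whence $\ijac(f_1,\dots,f_m)\subseteq Q$ and $g\mid\dgcd(f_1,\dots,f_m)$.

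For the converse, I would start from $g\mid\dgcd(f_1,\dots,f_m)$, i.e.\ $\ijac(f_1,\dots,f_m)\subseteq Q$. By Proposition~\ref{p1}, condition $(\ast)$ holds for some $i$ and $\overline{f_i}$ is $p$-dependent over $\overline{R_i}$, so by Lemma~\ref{l5} there are $b,c\in R_i$ with $g\nmid b$ and $g\mid bf_i+c$. If $g\in B$ this is $(ii)$, and if $g\notin B$ and $g^2\mid bf_i+c$ this is $(i)$. So assume $g\notin B$ and $g^2\nmid bf_i+c$, and write $bf_i+c=gh$ with $g\nmid h$. Applying $d_i$ gives $bJ\equiv d_i(g)\,h\pmod Q$, and since $g\mid J$ and $g\nmid h$ we get $g\mid d_i(g)=\jac^{f_1,\dots,f_{i-1},g,f_{i+1},\dots,f_m}_{j_1,\dots,j_m}$ for every multi-index; thus, putting $f'_i=g$ and $f'_l=f_l$ for $l\ne i$, one has $\ijac(f'_1,\dots,f'_m)\subseteq Q$. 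By Proposition~\ref{p1}(a), condition $(\ast)$ for $(f'_1,\dots,f'_m)$ holds at some index; should that index be $i$, its witness $(s_1,\dots,s_m)$ has $s_i\notin Q$, and not all $s_l$ with $l\ne i$ can lie in $Q$ — otherwise specializing $d$ to each $\partial/\partial x_t$ gives $s_i\frac{\partial g}{\partial x_t}\in Q$ for every $t$, forcing all partials of $g$ into $Q$ against $g\notin B$ — so, choosing $l\ne i$ with $s_l\notin Q$, the same witness shows $(\ast)$ holds at $l$. Hence $(\ast)$ for $(f'_1,\dots,f'_m)$ holds at some $k\ne i$. Since $R'_k=B[f'_l:l\ne k]=R_{ik}[g]$ has image $\overline{R_{ik}}$ in $\overline A$ (because $\overline g=\overline 0$), Proposition~\ref{p1}(b) gives that $\overline{f_k}$ is $p$-dependent over $\overline{R_{ik}}$, so by Lemma~\ref{l5} there are $b_2,c_2\in R_{ik}$ with $g\nmid b_2$ and $g\mid b_2f_k+c_2$. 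Finally, writing $b,c\in R_i=R_{ik}[f_k]$ and using $\overline{f_k}\in(\overline{R_{ik}})_0$, the relation $\overline b\,\overline{f_i}+\overline c=\overline 0$ with $\overline b\ne\overline 0$ shows $\overline{f_i}\in(\overline{R_{ik}})_0$, so by Lemma~\ref{l5} there are $b_1,c_1\in R_{ik}$ with $g\nmid b_1$ and $g\mid b_1f_i+c_1$; this is $(iii)$ with $j=k$.

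The step I expect to be the main obstacle is the last case of the converse — promoting the single divisibility $g\mid bf_i+c$ to the pair of divisibilities in $(iii)$. This forces one to pass to the auxiliary system obtained by replacing $f_i$ with $g$, to re-apply Proposition~\ref{p1}, and, above all, to use $g\notin B$ to guarantee that the resulting instance of $(\ast)$ is nontrivial at an index $k\ne i$ (at $i$ alone it only restates the hypothesis). The Leibniz-rule computations modulo $g$ are routine, but care is needed to confirm that all auxiliary quantities ($h$, the $u^{(s)}_l$, the $s_l$, and so on) lie in $A$.
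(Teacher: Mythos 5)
Your proof is correct, and for the necessity direction and for the sufficiency of conditions $(i)$ and $(ii)$ it follows essentially the same path as the paper: Proposition~\ref{p1} (via Proposition~\ref{p2}) and Lemma~\ref{l5} produce $g\mid bf_i+c$; the case $g\notin B$, $g\nmid h$ is handled by applying $d_i$ to get $g\mid d_i(g)$, replacing $f_i$ by $g$, re-applying Proposition~\ref{p1}, and using $g\notin B$ to find a second index $k\neq i$ — exactly as in the paper, including the observation that $\overline{R'_k}=\overline{R_{ik}}$ because $\overline g=\overline 0$ and the descent of the $p$-dependence of $\overline{f_i}$ from $\overline{R_i}$ to $\overline{R_{ik}}$. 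The one place where you genuinely diverge is the sufficiency of $(iii)$: the paper notes that $g\mid h_1$ and $g\mid h_2$ force $g\mid\dgcd(h_1,h_2)$, lifts this to the modified family via Lemma~\ref{l4} (Laplace expansion), and then uses multilinearity to extract $b_1b_2\jac^{f_1,\dots,f_m}_{j_1,\dots,j_m}$; you instead split off the sub-cases already covered by $(i)$ and $(ii)$, and in the remaining case differentiate both relations $b_sf+c_s=gp_s$ by an arbitrary derivation $d$ and eliminate $d(g)$ to manufacture a witness for condition $(\ast)$ of Proposition~\ref{p1} with $s_i=p_2b_1\notin(g)$. Both computations are valid; yours bypasses Lemma~\ref{l4} entirely and reuses Proposition~\ref{p1} as the single engine for both directions, at the cost of an extra case split (which is harmless, since $b_1,c_1\in R_{ij}\subseteq R_i$ makes the reductions to $(i)$ and $(ii)$ legitimate), while the paper's route isolates the reusable fact about $2\times 2$ jacobians of multiples of $g$. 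A cosmetic remark: when $m=1$ your "choose $l\neq i$ with $s_l\notin Q$" step is vacuous and what you actually obtain is a contradiction showing that the third case cannot occur, which is the intended conclusion there since $(iii)$ is unavailable; this is how the paper's argument reads as well.
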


\begin{proof}
$(\Rightarrow)$ \
We follow the arguments from the proof of implication
$(i)\Rightarrow (ii)$ in \cite{keller}, Theorem~4.1.
Assume that the polynomial $\dgcd(f_1,\dots,f_m)$
is divisible by an irreducible polynomial $g\in A$.
Then $\ijac(f_1,\dots,f_m)\subset (g)$,
where $(g)$ denotes the principal ideal generated by $g$,
so, by Proposition~\ref{p2},
there exist $i\in\{1,\dots,m\}$, $b,c\in R_i$
such that $g\nmid b$ and $g\mid bf_i+c$,
that is, $$bf_i+c=gh\leqno (1)$$ for some $h\in A$.
If $g\in B$, the condition $(ii)$ holds.
If $g\not\in B$ and $g\mid h$, the condition $(i)$ holds.
Now assume that $g\not\in B$ and $g\nmid h$.

\medskip

Consider arbitrary $j_1,\dots,j_m\in\{1,\dots,n\}$
and the derivation $d_i$ from Example~\ref{e1}.
Note that $d_i(b)=0$ and $d_i(c)=0$, because $b,c\in R_i$.
Applying the derivation $d_i$ to both sides of $(1)$
we obtain
$$b\jac^{f_1,\dots,f_m}_{j_1,\dots,j_m}=
gd_i(h)+d_i(g)h.$$
By the assumption, $g\mid\jac^{f_1,\dots,f_m}_{j_1,\dots,j_m}$,
so $g\mid d_i(g)$.

\medskip

Hence, the determinant
$$\jac^{f_1,\dots,f_{i-1},g,f_{i+1},\dots,f_m}_{j_1,\dots,j_m}=d_i(g)$$
is divisible by $g$ for every $j_1,\dots,j_m\in\{1,\dots,n\}$,
so, by Proposition~\ref{p1} a), there exist $s_1,\dots,s_m\in A$,
where $g\nmid s_j$ for some $j\in\{1,\dots,m\}$,
such that $$g\mid s_1d(f_1)+\ldots+s_id(g)+\ldots+s_md(f_m)$$
for every $K$-derivation $d$ of $A$.
Note that the polynomials $s_j$, $j\neq i$,
can not all together be divisible by $g$.
Indeed, in this case we would have $g\nmid s_i$ and $g\mid s_id(g)$,
so $g\mid d(g)$ for every $K$-derivation $d$,
what is not true for $d=\frac{\partial}{\partial x_l}$
such that $\frac{\partial g}{\partial x_l}\neq 0$
(recall that $g\not\in B$).
Thus $g\nmid s_j$ for some $j\neq i$,
so, by Proposition~\ref{p1} b),
$\overline{f_j}$ is $p$-dependent over
$\overline{B}[\overline{f_1},\dots,\overline{f_{i-1}},\overline{g},
\overline{f_{i+1}},\dots,\widehat{\overline{f_j}},$ $\dots,$ 
$\overline{f_m}]=\overline{R_{ij}}$.

\medskip

Since $\overline{f_i}$ is $p$-dependent over
$\overline{R_i}=\overline{R_{ij}[f_j]}$,
we obtain that both $\overline{f_i}$ and $\overline{f_j}$
are $p$-dependent over $\overline{R_{ij}}$.
Then, by Lemma~\ref{l5},
$g\mid b_1f_i+c_1$ and $g\mid b_2f_j+c_2$
for some $b_1,b_2,c_1,c_2\in R_{ij}$
such that $g\nmid b_1$ and $g\nmid b_2$,
and the condition $(iii)$ holds.

\medskip

$(\Leftarrow)$ \
Assume that the condition $(i)$ holds, that is,
$g\not\in B$ and $g^2\mid bf_i+c$ for some $i\in\{1,\dots,m\}$,
$b,c\in R_i$ such that $g\nmid b$.
Then $bf_i+c=g^2h$ for some $h\in A$.
Applying the derivation $d_i$ from Example~\ref{e1}
for arbitrary $j_1,\dots,j_m\in\{1,\dots,n\}$ we obtain that
$b\jac^{f_1,\dots,f_m}_{j_1,\dots,j_m}=2gd_i(g)h+g^2d_i(h)$,
so $g\mid\jac^{f_1,\dots,f_m}_{j_1,\dots,j_m}$.
Hence, $g\mid\dgcd(f_1,\dots,f_m)$.

\medskip

Analogously, if the condition $(ii)$ holds,
then $g\mid\dgcd(f_1,\dots,f_m)$ as well.

\medskip

Finally, assume that the condition $(iii)$ holds,
that is, $g\mid b_1f_i+c_1$ and $g\mid b_2f_j+c_2$
for some $i,j\in\{1,\dots,m\}$, $i\neq j$,
$b_1,b_2,c_1,c_2\in R_{ij}$ such that $g\nmid b_1$ and $g\nmid b_2$.
It is easy to check that if $g\mid h_1$ and $g\mid h_2$,
where $h_1,h_2\in A$, then $g\mid\jac^{h_1,h_2}_{l_1,l_2}$
for every $l_1$, $l_2$, so $g\mid\dgcd(h_1,h_2)$.
Hence,
$$g\mid\dgcd(b_1f_i+c_1,b_2f_j+c_2),$$
and then, by Lemma~\ref{l4} b), c),
$$g\mid\dgcd(f_1,\dots,b_1f_i+c_1,\dots,b_2f_j+c_2,\dots,f_m).$$

\medskip

On the other side, using the arguments from Example~\ref{e1}, 
for arbitrary $j_1,\dots,j_m\in\{1,\dots,n\}$ we have
$$\jac^{f_1,\dots,b_1f_i+c_1,\dots,b_2f_j+c_2,\dots,f_m}_{j_1,\dots,j_m}=
b_1\jac^{f_1,\dots,f_i,\dots,b_2f_j+c_2,\dots,f_m}_{j_1,\dots,j_m}=
b_1b_2\jac^{f_1,\dots,f_i,\dots,f_j,\dots,f_m}_{j_1,\dots,j_m}.$$
Since $g\nmid b_1b_2$, we obtain that 
$g\mid\jac^{f_1,\dots,f_m}_{j_1,\dots,j_m}$,
so, finally, $g\mid\dgcd(f_1,\dots,f_m)$.
\end{proof}

\section{A characterization of $p$-bases of rings of constants}

Recall that if $A$ is a domain, then two elements $a,b\in A$ 
are called associated, and we denote it $a\sim b$,
if $a=bc$ for some invertible element $c\in A$.
An element $a\in A$ is called square-free
if it is not divisible by a square of any noninvertible element.
If $\cha A=p>0$ and $B$ is a subring containing $A^p$,
then an element $a\in A$ is called $B$-free
if it is not divisible by any noninvertible element of $B$.
(Note that if $A^p\subset B$, then an element of $B$ 
is invertible in $A$ if and only if it is invertible in $B$).

\medskip

The following lemma contains some necessary conditions 
for one and two elements of a UFD to form a $p$-basis 
of the respective ring.
The case of one element was obtained in \cite{charoneel}, 
Theorem 4.2, for a polynomial algebra, 
but the arguments remain valid for arbitrary UFD.

\begin{lm}
\label{l6}
Let $A$ be UFD of characteristic $p>0$,
let $B$ be a subring of $A$, containing $A^p$.

\medskip

\noindent
{\bf a)}
If an element $f\in A$ forms a $p$-basis of $C_B(f)$ over $B$,
then $bf+c$ is square-free and $B$-free for every $b,c\in B$
such that $b\neq 0$ and $\gcd(b,c)\sim 1$.

\medskip

\noindent
{\bf b)}
If elements $f_1,f_2\in A$ form a $p$-basis of $C_B(f_1,f_2)$ over $B$,
then $\gcd(b_1f_1+c_1,b_2f_2+c_2)\sim 1$
for every $b_1,b_2,c_1,c_2\in B$ such that $b_i\neq 0$
and $\gcd(b_i,c_i)\sim 1$ for $i=1,2$.
\end{lm}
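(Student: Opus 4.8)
The plan is to deduce both parts from the divisibility criterion of Lemma~\ref{l2}, used for a contradiction. Since $f_1,\dots,f_m$ (with $m=1$ in a), $m=2$ in b)) form a $p$-basis of $C_B(f_1,\dots,f_m)$ over $B$, the equivalence $(4)\Rightarrow(3)$ of Lemma~\ref{l1} gives $C_B(f_1,\dots,f_m)=B[f_1,\dots,f_m]$ together with $p$-independence over $B$, while Lemma~\ref{l3} yields $B_0\cap A=B$. Hence Lemma~\ref{l2} applies, so we may use the following: for every $b'\in B\setminus\{0\}$ and every family $(a_\alpha\in B,\ \alpha\in\Omega_m)$, if $b'\mid\sum_{\alpha}a_\alpha f^\alpha$ then $b'\mid a_\alpha$ for every $\alpha$. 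I will contradict this.

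For $B$-freeness in a): if a noninvertible $q\in B$ divided $bf+c$, then, viewing $bf+c=c\cdot 1+b\cdot f$ as a $B$-combination of $1$ and $f$, the criterion would force $q\mid b$ and $q\mid c$, hence $q\mid\gcd(b,c)\sim 1$, so $q$ would be invertible. For square-freeness in a): suppose an irreducible $g\in A$ satisfies $g^2\mid bf+c$. First $g\nmid b$, since $g\mid b$ would give $g\mid(bf+c)-bf=c$, hence $g\mid\gcd(b,c)\sim 1$. Now put $k=\lceil p/2\rceil$, so that $1\leqslant k\leqslant p-1$ and $2k\geqslant p$; then
$$g^p\mid g^{2k}\mid(bf+c)^k=\sum_{i=0}^{k}\binom{k}{i}b^i c^{k-i}f^i,$$
and the right-hand side is a $B$-combination of $1,f,\dots,f^k$, all exponents being $<p$. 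Its coefficient at $f^k$ is $b^k$, which is not divisible by $g$, hence not by $g^p$; since $g^p\in A^p\subseteq B$ and $g^p\neq 0$, this contradicts the criterion from Lemma~\ref{l2}.

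For b): suppose an irreducible $g\in A$ divides both $b_1f_1+c_1$ and $b_2f_2+c_2$; as above $g\nmid b_1$ and $g\nmid b_2$. With the same $k=\lceil p/2\rceil$, $g^p$ divides $(b_1f_1+c_1)^k(b_2f_2+c_2)^k$, and expanding both factors by the binomial theorem realizes this product as a $B$-combination $\sum_{0\leqslant i,j\leqslant k}\binom{k}{i}\binom{k}{j}b_1^i c_1^{k-i}b_2^j c_2^{k-j}f_1^i f_2^j$ with all exponents $<p$, whose coefficient at $f_1^k f_2^k$ equals $b_1^k b_2^k$ and is therefore not divisible by $g^p$. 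Again this contradicts Lemma~\ref{l2}. The one point requiring care is that divisibility by $g$ or $g^2$ cannot be tested against the $p$-basis criterion because $g\notin B$; the device is to pass to $g^p\in A^p\subseteq B$ and to choose the exponent $k=\lceil p/2\rceil$ so that a suitable power of $bf+c$ (resp. of the product) becomes divisible by $g^p$ while still being a $B$-combination of the monomials $f^\alpha$, $\alpha\in\Omega_m$; everything else is routine.
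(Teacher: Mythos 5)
Your proposal is correct and follows essentially the same route as the paper: reduce to the divisibility criterion of Lemma~\ref{l2} (via Lemmas~\ref{l1} and~\ref{l3}), and pass from $g$ or $g^2$ to $g^p\in A^p\subseteq B$ by raising $bf+c$ (resp.\ the product) to a power small enough that the binomial expansion stays within the monomials $f^\alpha$, $\alpha\in\Omega_m$. The only difference is cosmetic: you use the exponent $\lceil p/2\rceil$ where the paper uses $p-1$ in part a) and the product $(b_1f_1+c_1)^{p-1}(b_2f_2+c_2)$ in part b); both choices work for the same reason.
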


\begin{proof}
{\bf a)}
Assume that $f$ forms a $p$-basis of $C_B(f)$ over $B$.
Consider arbitrary elements $b,c\in B$
such that $b\neq 0$ and $\gcd(b,c)\sim 1$.
If $h\mid bf+c$ for some noninvertible element $h\in B$,
then, by Lemmas~\ref{l2} and~\ref{l3}, 
$h\mid b$ and $h\mid c$, a contradiction.

\medskip

Now, suppose that $g^2\mid bf+c$ for some 
irreducible element $g\in A$.
Note that $g^p\mid g^{2(p-1)}$ and $g^{2(p-1)}\mid (bf+c)^{p-1}$,
so $g^p\mid (bf+c)^{p-1}$.
We have $(bf+c)^{p-1}=b^{p-1}f^{p-1}+\ldots+c^{p-1}$,
and by Lemmas~\ref{l2} and~\ref{l3} we obtain that
$g^p\mid b^{p-1}$ and $g^p\mid c^{p-1}$,
so $g\mid b$ and $g\mid c$, a contradiction.

\medskip

\noindent
{\bf b)}
Assume that $f_1,f_2\in A$ form a $p$-basis 
of $C_B(f_1,f_2)$ over $B$.
Suppose that $g\mid b_1f_1+c_1$ and $g\mid b_2f_2+c_2$
for some irreducible element $g\in A$ 
and some $b_1,b_2,c_1,c_2\in B$ such that $b_i\neq 0$
and $\gcd(b_i,c_i)\sim 1$ for $i=1,2$.
Then $g^p\mid (b_1f_1+c_1)^{p-1}(b_2f_2+c_2)$, that is,
$$g^p\mid b_1^{p-1}b_2f_1^{p-1}f_2+b_1^{p-1}c_2f_1^{p-1}+\ldots+
c_1^{p-1}b_2f_2+c_1^{p-1}c_2.$$
By Lemmas~\ref{l2} and~\ref{l3} we obtain that 
$g^p\mid b_1^{p-1}b_2$, so $g\mid b_1$ or $g\mid b_2$.
And this is impossible: if $g\mid b_i$, where $i\in\{1,2\}$,
then $g\mid c_i$, because $g\mid b_if_i+c_i$.
\end{proof}

In the following proposition we obtain 
a necessary condition for a $p$-basis,
stronger than $(iv)$ in \cite{jaccond}, Theorem~2.3.
It will follow from Theorem~\ref{t2}, 
that in the case of the polynomial algebra
this condition is also sufficient.

\begin{pr}
\label{p3}
Let $A$ be UFD of characteristic $p>0$,
let $B$ be a subring of $A$, containing $A^p$.
Assume that the elements $f_1,\dots,f_m\in A$ 
form a $p$-basis of $C_B(f_1,\dots,f_m)$ over $B$.
Denote: $R_i=B[f_1,\dots,\widehat{f_i},\dots,f_m]$,
$R_{ij}=B[f_1,\dots,\widehat{f_i},\dots,
\widehat{f_j},\dots,f_m]$.
Then the following conditions hold:

\smallskip

\noindent
$(i)$ \
the element $bf_i+c$ is square-free and $B$-free
for every $i\in\{1,\dots,m\}$ and $b,c\in R_i$ 
such that $b\neq 0$ and $\gcd(b,c)\sim 1$,

\smallskip

\noindent
$(ii)$ \
if $m>1$, then $\gcd(b_1f_i+c_1,b_2f_j+c_2)\sim 1$
for every $i,j\in\{1,\dots,m\}$, $i\neq j$, 
and $b_1,b_2,c_1,c_2\in R_{ij}$ 
such that $b_1,b_2\neq 0$,
$\gcd(b_1,c_1)\sim 1$, $\gcd(b_2,c_2)\sim 1$.
\end{pr}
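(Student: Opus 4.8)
The plan is to deduce both parts from Lemma~\ref{l6}, applied not over $B$ but over the intermediate subrings $R_i$ and $R_{ij}$. This is legitimate because the ambient ring $A$ is unchanged and Lemma~\ref{l6} only requires the ground subring to contain $A^p$, which holds since $A^p\subset B\subset R_{ij}\subset R_i\subset A$. So everything reduces to two claims: that $f_i$ forms a $p$-basis of $C_{R_i}(f_i)$ over $R_i$ for each $i$, and that $f_i,f_j$ form a $p$-basis of $C_{R_{ij}}(f_i,f_j)$ over $R_{ij}$ for each $i\neq j$. Granting these, Lemma~\ref{l6}\,a) with $R_i$ in place of $B$ gives $(i)$, and Lemma~\ref{l6}\,b) with $R_{ij}$ in place of $B$ gives $(ii)$. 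In the first case the conclusion obtained is that $bf_i+c$ is square-free and $R_i$-free; this is a priori stronger than $B$-free, since any noninvertible element of $B$ stays noninvertible in $R_i$ (an inverse in $R_i\subset A$ would be an inverse in $A$, hence, by the remark on invertibility, in $B$).

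To prove the first claim I would use Lemma~\ref{l1}, specifically its unconditional equivalence $(3)\Leftrightarrow(4)$: it suffices to check that $f_i$ is $p$-independent over $R_i$ and that $C_{R_i}(f_i)=R_i[f_i]$. For $p$-independence, apply the degree criterion from Section~2. Since $f_k^p\in A^p\subset B$ for each $k$, in the tower $B_0\subset B_0(f_1,\dots,\widehat{f_i},\dots,f_m)\subset B_0(f_1,\dots,f_m)$ the lower step has degree $\leqslant p^{m-1}$ and the upper step degree $\leqslant p$, while the $p$-independence of $f_1,\dots,f_m$ over $B$ forces the total degree to be exactly $p^m$; hence both steps are of maximal degree, so $f_i$ is $p$-independent over $R_i$ (whose fraction field is $B_0(f_1,\dots,\widehat{f_i},\dots,f_m)$). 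For the equality of rings, note $(R_i)_0(f_i)=B_0(f_1,\dots,f_m)$, so $C_{R_i}(f_i)=B_0(f_1,\dots,f_m)\cap A=C_B(f_1,\dots,f_m)$; and since $f_1,\dots,f_m$ form a $p$-basis of $C_B(f_1,\dots,f_m)$ over $B$ they generate it as a $B$-algebra, so $C_B(f_1,\dots,f_m)=B[f_1,\dots,f_m]=R_i[f_i]$.

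The second claim is proved identically: in the tower $B_0\subset(R_{ij})_0\subset B_0(f_1,\dots,f_m)$ one has $[(R_{ij})_0:B_0]\leqslant p^{m-2}$ and the upper step of degree $\leqslant p^2$, and equality of the total degree $p^m$ forces both to be maximal, so $f_i,f_j$ are $p$-independent over $R_{ij}$; and $C_{R_{ij}}(f_i,f_j)=B_0(f_1,\dots,f_m)\cap A=C_B(f_1,\dots,f_m)=B[f_1,\dots,f_m]=R_{ij}[f_i,f_j]$. Lemma~\ref{l1} then gives the $p$-basis property over $R_{ij}$, and Lemma~\ref{l6}\,b) over $R_{ij}$ gives $(ii)$. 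When $m=1$ the statement is nothing but Lemma~\ref{l6}\,a) (with $R_1=B$), and $(ii)$ is vacuous.

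I do not expect a genuine obstacle here. The only point that needs care is the passage to the relative setting over $R_i$ and $R_{ij}$ --- that is, verifying that $p$-independence descends to $f_i$ over $R_i$ and to the pair $f_i,f_j$ over $R_{ij}$, and identifying $C_{R_i}(f_i)$ and $C_{R_{ij}}(f_i,f_j)$ with $C_B(f_1,\dots,f_m)$. Both become short computations once the right intermediate fraction fields are written down; alternatively one could bypass the reduction and repeat the explicit computations of Lemma~\ref{l6} using Lemmas~\ref{l2} and~\ref{l3}, but the reduction is cleaner.
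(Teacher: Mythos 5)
Your proposal is correct and follows essentially the same route as the paper: the paper likewise reduces both parts to Lemma~\ref{l6} applied over the intermediate rings $R_i$ and $R_{ij}$, observing that $f_i$ (resp.\ $f_i,f_j$) forms a $p$-basis of $R=C_B(f_1,\dots,f_m)=B[f_1,\dots,f_m]$ over $R_i$ (resp.\ $R_{ij}$), that $C_{R_i}(f_i)=C_{R_{ij}}(f_i,f_j)=R$, and that an $R_i$-free element is $B$-free. Your more explicit verification of the intermediate $p$-independence via field degrees is a correct elaboration of what the paper dispatches with ``by the definition of a $p$-basis.''
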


\begin{proof}
Put $R=C_B(f_1,\dots,f_m)$.
Then $R=B[f_1,\dots,f_m]$ by Lemma~\ref{l1}.
By the definition of a $p$-basis we see that, 
for every $i\in\{1,\dots,m\}$,
the element $f_i$ forms a $p$-basis of $R$ over $R_i$.
On the other hand, $C_{R_i}(f_i)=R$,
so $(i)$ follows from Lemma \ref{l6}~a),
note that $R_i$-free element is $B$-free.
Similarly, for every $i,j\in\{1,\dots,m\}$, $i\neq j$,
the elements $f_i,f_j$ form a $p$-basis of $R$ over $R_{ij}$,
and $C_{R_{ij}}(f_i,f_j)=R$, 
so $(ii)$ follows from Lemma \ref{l6}~b).
\end{proof}

The next lemma will help us apply the conclusions
of Theorem~\ref{t1} in the proof of Theorem~\ref{t2}.

\begin{lm}
\label{l7}
Let $A$ be a UFD of characteristic $p>0$,
let $B$ be a subring of $A$ such that $A^p\subset B$.
Consider arbitrary element $f\in A$.
Assume that $g^{\varepsilon}\mid bf+c$
for some irreducible element $g\in A$,
$\varepsilon\in\{1,2\}$ and $b,c\in B$ such that $g\nmid b$.
Then there exists an element $c'\in B$ such that
$g^{\varepsilon}\mid bf+c'$ and $\gcd(b,c')\sim 1$.
\end{lm}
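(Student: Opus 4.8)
The plan is to keep the same coefficient $b$ and adjust only the additive term, writing $c'=c+v$ with $v$ chosen in the ideal $g^pB$ of $B$. Since $p\geqslant 2$ and $\varepsilon\in\{1,2\}$ we have $\varepsilon\leqslant p$, so $g^{\varepsilon}\mid g^p\mid v$; as $(bf+c')-(bf+c)=v$, the divisibility $g^{\varepsilon}\mid bf+c'$ will then follow automatically from the hypothesis $g^{\varepsilon}\mid bf+c$. Thus everything reduces to choosing $v\in g^pB$ so that $\gcd(b,c+v)\sim 1$. We may assume $b$ is non-invertible (otherwise $\gcd(b,c)\sim 1$ already and $c'=c$ works), and we note $b\neq 0$ because $g\mid 0$ while $g\nmid b$.

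Next I would factor $b$ in the UFD $A$, letting $q_1,\dots,q_s$ be its pairwise non-associated irreducible divisors. Each $q_i$ is not associated to $g$ (otherwise $g\mid b$), hence $q_i\nmid g$ and so $q_i\nmid g^p$. Put $S=\{i:q_i\nmid c\}$, set $t=\bigl(\prod_{i\in S}q_i\bigr)^p$ with the empty product read as $1$, and take $v=tg^p$, $c'=c+v$. The point of raising to the $p$-th power is exactly that then $t\in A^p\subseteq B$, so $v\in g^pB\subseteq B$ and $c'\in B$. This is the step where the assumption $A^p\subseteq B$ is essential, and it is what I regard as the main obstacle: a naive Chinese-remainder construction of $t$ need not produce an element of $B$, since non-associated irreducibles of a general UFD need not generate comaximal ideals, so one really has to exploit that $p$-th powers lie in $B$.

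Finally I would verify $\gcd(b,c')\sim 1$ by checking that no $q_j$ divides $c'=c+tg^p$. If $j\in S$, then $q_j\mid t$, hence $c'\equiv c\pmod{q_j}$ while $q_j\nmid c$ by the definition of $S$. If $j\notin S$, then $q_j\nmid\prod_{i\in S}q_i$ because the $q_i$ are pairwise non-associated irreducibles, hence $q_j\nmid t$; since also $q_j\nmid g^p$, we get $q_j\nmid tg^p$, so $c'\equiv tg^p\not\equiv 0\pmod{q_j}$, whereas $q_j\mid c$. In either case $q_j\nmid c'$; as every irreducible factor of $b$ is among $q_1,\dots,q_s$, this yields $\gcd(b,c')\sim 1$, and together with the first paragraph this finishes the proof. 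The only computations are these divisibility bookkeeping steps, all routine.
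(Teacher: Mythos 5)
Your proof is correct and follows essentially the same route as the paper: both add to $c$ the element $g^p$ times the $p$-th powers of exactly those irreducible factors of $b$ that do not divide $c$, using $A^p\subset B$ to keep the correction term in $B$ and the pairwise non-associatedness of the $q_i$ (together with $q_i\not\sim g$) for the divisibility bookkeeping. Your phrasing via the set $S=\{i:q_i\nmid c\}$ is just a cleaner packaging of the paper's reindexing $q_i\sim r_i$ for $i\leqslant l$.
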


\begin{proof}
Consider the decompositions of $b$ and $c$ into irreducible factors:
$b=uq_1^{\alpha_1}\dots q_s^{\alpha_s}$,
$c=vr_1^{\beta_1}\dots r_t^{\beta_t}$,
where the elements $u,v\in A$ are invertible, $s,t\geqslant 0$, 
$q_i\not\sim q_j$ for $i\neq j$, $r_i\not\sim r_j$ for $i\neq j$,
$\alpha_i>0$, $\beta_i>0$.
We may assume that there exists $l\geqslant 0$, $l\leqslant s$,
such that $q_i\sim r_i$ if $i\leqslant l$
and $q_i\not\sim r_j$ if $i,j>l$.

\medskip

Put $h=g^pq_{l+1}^p\dots q_s^p$ ($h=g^p$ if $l=s$) and $c'=c+h$.
We have $h\in B$ and $g^{\varepsilon}\mid h$,
so $c'\in B$ and $g^{\varepsilon}\mid bf+c'$.
Note that $q_i\not\sim g$ for each $i$, because $g\nmid b$.
Now, if $i\leqslant l$, then $q_i\mid c$ and $q_i\nmid h$,
so $q_i\nmid c'$.
If $i>l$, $i\leqslant s$, then $q_i\nmid c$ and $q_i\mid h$,
so $q_i\nmid c'$.
Hence $\gcd(b,c')\sim 1$.
\end{proof}

Now we can prove the main result of the paper
-- a characterization of $p$-bases of rings of constants 
of polynomial derivations.

\begin{tw}
\label{t2}
Let $K$ be a UFD of characteristic $p>0$,
consider arbitrary polynomials
$f_1,\dots,f_m\in K[x_1,\dots,x_n]$,
where $m\in\{1,\dots,n\}$.
Denote: $B=K[x_1^p,\dots,x_n^p]$,
$R_i=B[f_1,\dots,\widehat{f_i},\dots,f_m]$,
$R_{ij}=B[f_1,\dots,\widehat{f_i},\dots,
\widehat{f_j},\dots,f_m]$, $i\neq j$.

\medskip

The following conditions are equivalent:

\smallskip

\noindent
$(1)$ \
$\dgcd(f_1,\dots,f_m)\sim 1$,

\medskip

\noindent
$(2)$ \
the polynomials $f_1,\dots,f_m$ form a $p$-basis
of a ring of constants of some $K$-derivation,

\medskip

\noindent
$(3)$ \
the polynomial $bf_i+c$ is square-free and $B$-free
for every $i\in\{1,\dots,m\}$ and $b,c\in R_i$
such that $b\neq0$ and $\gcd(b,c)\sim 1$,
and, if $m>1$, then\\ 
$\gcd(b_1f_i+c_1,b_2f_j+c_2)\sim 1$
for every $i,j\in\{1,\dots,m\}$, $i\neq j$,
and $b_1,b_2,c_1,c_2\in R_{ij}$ 
such that $b_1,b_2\neq 0$, $\gcd(b_1,c_1)\sim 1$ 
and $\gcd(b_2,c_2)\sim 1$.
\end{tw}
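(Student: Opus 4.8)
The plan is to prove the cyclic chain $(1)\Rightarrow(2)\Rightarrow(3)\Rightarrow(1)$, drawing on the material already assembled. The implication $(1)\Rightarrow(2)$ is essentially $(i)\Rightarrow(ii)$ from \cite{jaccond}, Theorem~2.3, quoted in the introduction; since $\dgcd(f_1,\dots,f_m)\sim 1$ is exactly condition $(i)$ there, this direction is free. For $(2)\Rightarrow(3)$ I would invoke Proposition~\ref{p3}: if $f_1,\dots,f_m$ form a $p$-basis of some ring of constants $R$ of a $K$-derivation, then by the second paragraph of Section~2, $R=C_B(f_1,\dots,f_m)$ (recall $A=K[x_1,\dots,x_n]$ is finitely generated over $B$), so $f_1,\dots,f_m$ form a $p$-basis of $C_B(f_1,\dots,f_m)$ over $B$, and Proposition~\ref{p3} delivers precisely the two assertions comprising condition $(3)$.

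The real work is $(3)\Rightarrow(1)$, and I would argue by contraposition: assume $\dgcd(f_1,\dots,f_m)\not\sim 1$ and produce a violation of $(3)$. First, $\dgcd(f_1,\dots,f_m)\neq 0$, since otherwise the $f_i$ would fail to be $p$-independent (their Jacobian matrix would have rank $<m$ over $A_0$, forcing a nontrivial algebraic relation over $B_0$) and then $(3)$ is vacuous or one argues separately that $p$-independence is needed — actually I should be careful here and note that if $\dgcd=0$ then $f_1,\dots,f_m$ are $p$-dependent, and $p$-dependence itself contradicts $(3)$: a nontrivial $B$-linear relation $\sum_\alpha b_\alpha f^\alpha=0$ can be massaged, using $\gcd$-cancellation, into a failure of square-freeness or coprimality of the relevant shifted polynomials. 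Setting that case aside, suppose $\dgcd(f_1,\dots,f_m)$ is divisible by an irreducible $g\in A$. Then Theorem~\ref{t1} applies and one of its three alternatives $(i)$, $(ii)$, $(iii)$ holds. In cases $(i)$ and $(ii)$ we get $b,c\in R_i$ with $g\nmid b$ and $g^\varepsilon\mid bf_i+c$ for $\varepsilon\in\{1,2\}$ ($\varepsilon=2$, $g\notin B$ in case $(i)$; $\varepsilon=1$, $g\in B$ in case $(ii)$); applying Lemma~\ref{l7} (with $B$ there replaced by $R_i$, which contains $A^p$) we may adjust $c$ to $c'\in R_i$ with $\gcd(b,c')\sim 1$ while keeping $g^\varepsilon\mid bf_i+c'$. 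In case $(i)$ this says $bf_i+c'$ has the square factor $g^2$; in case $(ii)$, since $g\in B$ is noninvertible, $bf_i+c'$ has a noninvertible factor from $B$, i.e.\ is not $B$-free. Either way the first clause of $(3)$ fails. In case $(iii)$ we get $b_1,b_2,c_1,c_2\in R_{ij}$ with $g\nmid b_1,b_2$, $g\mid b_1f_i+c_1$, $g\mid b_2f_j+c_2$; applying Lemma~\ref{l7} twice (again over $R_{ij}$) we obtain $c_1',c_2'\in R_{ij}$ with $\gcd(b_1,c_1')\sim 1$, $\gcd(b_2,c_2')\sim 1$, and still $g\mid b_1f_i+c_1'$, $g\mid b_2f_j+c_2'$, so $g\mid\gcd(b_1f_i+c_1',b_2f_j+c_2')$, contradicting the second clause of $(3)$.

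The main obstacle I anticipate is the bookkeeping around the hypotheses of Theorem~\ref{t1} and Lemma~\ref{l7}: Theorem~\ref{t1} produces $b,c$ only in the rings $R_i$ or $R_{ij}$, and one must check that these rings are UFDs containing $A^p$ — they are polynomial extensions of the UFD $K$, hence UFDs, and visibly contain $K[x_1^p,\dots,x_n^p]\supset A^p$ — so Lemma~\ref{l7} genuinely applies with its role of "$B$" played by $R_i$ (resp.\ $R_{ij}$). A secondary subtlety is the degenerate case $\dgcd(f_1,\dots,f_m)=0$ flagged above, where $p$-dependence must be shown to be incompatible with condition $(3)$; I would handle it by noting that a minimal nontrivial relation $\sum_{\alpha}b_\alpha f^\alpha=0$ with all $b_\alpha\in B$ leads, after collecting one variable $f_i$ and invoking coprimality reductions analogous to those in the proof of Lemma~\ref{l6}, to the conclusion that some shifted polynomial $b f_i+c$ is divisible by $g^p$ for an irreducible $g$, again violating square-freeness. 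Apart from this, everything is an assembly of Theorem~\ref{t1}, Proposition~\ref{p3}, Lemma~\ref{l7}, and the previously quoted implication $(i)\Rightarrow(ii)$ of \cite{jaccond}.
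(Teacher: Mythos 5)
Your proposal matches the paper's own proof: $(1)\Rightarrow(2)$ by citation of \cite{jaccond}, $(2)\Rightarrow(3)$ via Lemma~\ref{l1} and Proposition~\ref{p3}, and $\neg(1)\Rightarrow\neg(3)$ by combining Theorem~\ref{t1} with Lemma~\ref{l7} applied over $R_i$ (resp.\ $R_{ij}$), exactly as you describe. Two small remarks: your separate treatment of the case $\dgcd(f_1,\dots,f_m)=0$ is unnecessary (and is the only vague spot in your argument), since $0$ is divisible by every irreducible polynomial and Theorem~\ref{t1} therefore applies verbatim in that case; and Lemma~\ref{l7} does not require $R_i$ to be a UFD (which it need not be, as the $f_j$ may be algebraically dependent over $B$), only that it is a subring of the UFD $A$ containing $A^p$.
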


\begin{proof}
$(1)\Rightarrow (2)$ \
This implication was established
in \cite{one-element}, Theorem~2.3 for $m=1$
and in \cite{jaccond}, Theorem~2.3 for $m>1$.

\medskip

\noindent
$(2)\Rightarrow (3)$ \
This implication follows from
Lemma~\ref{l1} and Proposition~\ref{p3}.

\medskip

\noindent
$\neg (1)\Rightarrow \neg (3)$ \
Assume that $\dgcd(f_1,\dots,f_m)\not\sim 1$,
so $\dgcd(f_1,\dots,f_m)$ is divisible
by some irreducible polynomial $g\in A$.
Then, by Theorem~\ref{t1}, at least one of
the conditions $(i)$ -- $(iii)$ hold.
In each case, using Lemma~\ref{l7}, we deduce $\neg (3)$.
\end{proof}


\begin{thebibliography}{0}

\bibitem{Adjamagbo} K.~Adjamagbo,
{\em On separable algebras over a UFD
and the Jacobian Conjecture in any characteristic},
in: {\em A.~van den Essen (ed.), Automorphisms of Affine Spaces, 
Proceedings of the conference "Invertible Polynomial Maps", 
July 4--8, 1994},
Cura\c cao, Caribbean Mathematics Foundation,
Kluwer Academic Publishers, 1995.

\bibitem{Essen} A.~van den Essen,
{\em Polynomial automorphisms and the Jacobian Conjecture},
Birkh\"{a}user Verlag, Basel, 2000.

\bibitem{ENT} A.~van den Essen, A.~Nowicki, A.~Tyc,
{\em Generalizations of a lemma of Freudenburg},
J.\ Pure Appl.\ Algebra 177 (2003), 43--47.

\bibitem{Freudenburg} G.~Freudenburg,
{\em A note on the kernel of a locally nilpotent derivation},
Proc.\ Amer.\ Math.\ Soc.\ 124 (1996), 27--29.

\bibitem{keller} P.~J\k edrzejewicz,
{\em A characterization of Keller maps},
to appear in J.\ Pure Appl.\ Algebra, arXiv: 1109.2113.

\bibitem{charoneel} P.~J\k edrzejewicz,
{\em A characterizetion of one-element $p$-bases of rings of constants},
Bull.\ Pol.\ Acad.\ Sci.\ Math.\ 59 (2011), 19--26.

\bibitem{noterings} P.~J\k edrzejewicz,
{\em A note on rings of constants of derivations in integral domains},
Colloq.\ Math.\ 122 (2011), 241--245.

\bibitem{eigenvector} P.~J\k edrzejewicz,
{\em Eigenvector $p$-bases of rings of constants of derivations},
Comm.\ Algebra 36 (2008), 1500--1508.

\bibitem{jaccond} P.~J\k{e}drzejewicz,
{\em Jacobian conditions for $p$-bases},
to appear in Comm.\ Algebra.

\bibitem{one-element} P.~J\k edrzejewicz,
{\em One-element $p$-bases of rings of constants of derivations},
Osaka J.\ Math. 46 (2009), 223--234.

\bibitem{p-homogeneous} P.~J\k edrzejewicz,
{\em Rings of constants of $p$-homogeneous polynomial derivations},
Comm.\ Algebra 31 (2003), 5501--5511.

\bibitem{Matsumura} H.~Matsumura,
{\em Commutative algebra}, 
2nd ed., Benjamin, Reading, 1980.

\bibitem{NiitsumaTRU} H.~Niitsuma,
{\em Jacobian matrix and $p$-basis},
TRU Mathematics 24 (1988), 19--34.

\bibitem{NiitsumaBCP} H.~Niitsuma,
{\em Jacobian matrix and $p$-basis},
w: {\em Topics in algebra}, 185--188,
Banach Center Publ.\ 26, part 2,
PWN, Warszawa 1990.

\bibitem{Nousiainen} P.~Nousiainen,
{\em On the Jacobian Problem in positive characteristic}
(preprint), Pennsylvania State University, 1981 (MR 648679).

\bibitem{polder} A.~Nowicki,
{\em Polynomial derivations and their rings of constants},
Nicolaus Copernicus University, Toru\'n, 1994,
www.mat.umk.pl/\verb1~1anow/.

\bibitem{NN} A.~Nowicki and M.~Nagata,
{\em Rings of constants for $k$-derivations in $k[x_1,\dots,x_n]$},
J.\ Math.\ Kyoto Univ.\ 28 (1988), 111--118.

\bibitem{Ono1} T.~Ono,
{\em A note on $p$-bases of rings},
Proc.\ Amer.\ Math.\ Soc.\ 128 (2000), 353--360.

\bibitem{Ono2} T.~Ono,
{\em A note on $p$-bases of a regular affine domain extension},
Proc.\ Amer.\ Math.\ Soc.\ 136 (2008), 3079--3087.

\end{thebibliography}
\end{document}